\newcommand{\cmark}{\text{\ding{51}}}
\newcommand{\xmark}{~}
\newtheorem{thm}{Theorem}[section]
\newtheorem{cor}[thm]{Corollary}
\newtheorem{lem}[thm]{Lemma}
\newtheorem{prop}[thm]{Proposition}
\newtheorem{rem}[thm]{Remark}
\title{Self-dual, self-Petrie-dual and M\"{o}bius regular maps on linear fractional groups.}
\begin{document}

\author{}
\date{}
\maketitle

\begin{center}

{\large Grahame Erskine} \\
\vspace{1.5mm} {\small Open University, Milton Keynes, U.K.}\\

grahame.erskine@open.ac.uk

\vspace{5mm}

{\large Katar\'{i}na Hri\v{n}\'{a}kov\'{a}} \\
\vspace{1.5mm} {\small
Slovak University of Technology, Bratislava, Slovakia}

hrinakova@math.sk

\vspace{3mm}

{and}

\vspace{3mm}

{\large Olivia Jeans} \\
\vspace{1.5mm} {\small Open University, Milton Keynes, U.K.}\\

olivia.jeans@open.ac.uk

\vspace{4mm}

\end{center}

\begin{abstract}
Regular maps on linear fractional groups $PSL(2,q)$ and $PGL(2,q$) have been studied for many years and the theory is well-developed, including generating sets for the asscoiated groups. This paper studies the properties of self-duality, self-Petrie-duality and M\"{o}bius regularity in this context, providing necessary and sufficient conditions for each case. We also address the special case for regular maps of type (5,5). The final section includes an enumeration of the $PSL(2,q)$ maps for $q\le81$ and a list of all the $PSL(2,q)$ maps which have any of these special properties for $q\le49$.

\smallskip
\noindent \textbf{Keywords:} 
Regular map;
External symmetry;
Self-duality;
M\"{o}bius regular.
\end{abstract}

\section{Introduction}

Regular maps always display inherent symmetry by virtue of their definition. A regular map can have further symmetry properties which are called {\em external symmetries}. These occur when a map is isomorphic to its image under a particular operation. The best known example of this is the tetrahedron, a Platonic solid which is self-dual.

A {\em map} is a cellular embedding of a graph on a surface and is made up of vertices, edges and faces. We define a {\em flag} of the map to be a triple incidence of a vertex, an edge and a face. Informally we can visualise each flag as a triangle with its corners at the vertex, the centre of the face and the midpoint of the edge, and so the whole surface is covered by flags. We consider the symmetries of a map by reference to its flags. An {\em automorphism} of a map is an arbitrary permutation of its flags such that all adjacency relationships of the flags are preserved. The map is {\em regular} if the group of automorphisms acts regularly on the flags, that is the group is fixed-point-free and transitive. An implication of this is that each vertex of a regular map has a given valency, say $k$, and the face lengths are all equal, say to $l$. Henceforth we will refer to maps of type $(k,l)$ where $k$ is the vertex degree and $l$ is the face length of the regular map.

For further details about the theory of regular maps see \cite{{Brah}, {JS}, {Mac}, {Sah}, {Si-surv}}.

Every regular map has an associated {\em dual map} which is also a regular map. Informally, the dual map is created by forming a vertex at the centre of each original face and considering each of the original vertices as the centre of a face. Each edge of the dual map is thereby formed by linking a pair of neighbouring vertices across one of the original edges.

A different type of dual, the {\em Petrie dual} of a map has the same edges and vertices as the original map but the faces are different. That is, the underlying graph is the same, but the embedding is different. The boundary walk of a face of the Petrie dual map can be described informally as follows:
\begin{enumerate}
\item Starting from a vertex on the original map, trace along one side of an incident edge until you get to the midpoint of that edge;
\item Cross over to the other side of the edge and continue tracing along the edge in the same direction as before. When you approach a vertex, sweep the corner and continue along the next edge until you reach its midpoint;
\item Repeat step 2 until you rejoin the face boundary walk where you started.
\end{enumerate}

When the associated dual map is isomorphic to the original map, we call the map {\em self-dual} or {\em self-Petrie-dual} respectively. This paper explores necesary and sufficient conditions for a regular map with automorphism group $PSL(2,q)$ or $PGL(2,q)$, where $q$ is odd, to have each of these external symmetries.

Another property of interest in the theory of regular maps is {\em M\"{o}bius regularity}. This concept was introduced by S. Wilson in \cite{Wilson} who originally named them {\em cantankerous}. A regular map is M\"{o}bius regular if any two distinct adjacent vertices are joined by exactly two edges and any open set supporting these edges contains a M\"{o}bius strip. Clearly such a map must have even vertex degree $k$ and we will establish the further conditions under which a regular map on $PSL(2,q)$ or $PGL(2,q)$ is M\"{o}bius regular.

In Section 2 we state some of the background material and results which we will need.
Section 3 investigates regular maps of type $(p,p)$, $(k,p)$ and $(p,l)$  when $k$ and $l$ are coprime to $p$. Type $(p,p)$ is self-dual but not self-Petrie-dual nor M\"{o}bius regular, and type $(p,l)$ can be self-Petrie-dual but not M\"{o}bius regular. Type $(k,p)$ can be self-Petrie-dual or M\"{o}bius regular.
In Section 4 we address the necessary and sufficient conditions for a map of type $(k,l)$ to be self-dual, self-Petrie-dual and M\"{o}bius regular respectively.
Section 5 highlights a special case, namely maps of type $(5,5)$ whose orientation-preserving automorphism groups turn out to be isomorphic to $A_5$ and Section 6 comments on and lists examples of maps with some or all of these properties.

\bigskip

\section{Background information, notes and notation}

This paper is founded on work done by M. Conder, P. Poto\v{c}nik and J. \v{S}ir\'{a}\v{n} in \cite{CPS2} which provides a detailed analysis of reflexible regular hypermaps for triples $(k,l,m)$ on projective two-dimensional linear groups including explicit generating sets for the associated groups. In particular this paper is concerned only with maps, not hypermaps, and so, without loss of generality, we let $m=2$.

The group of automorphisms of a regular map is generated by three involutions two of which commute, where the three involutions can be thought of as local reflections in the boundary lines of a given flag which preserves all the  adjacency relationships between flags. As shown in Figure \ref{fig1} the involutions act locally on the given flag as follows: $X$ as a reflection in the edge bisector; $Y$ as a reflection across the edge; $Z$ as a reflection in the angle bisector at the vertex. The dots on the diagram indicate where there may be further vertices, edges and faces while the dashed lines outline each of the flags of this part of the map.

\begin{figure}
\begin{tikzpicture}
  \draw [inner sep=3pt, fill=gray, dotted] (0,0) -- (-1.5,0) -- (0,2.5) -- (0,0);
  \draw [line width=2pt] (1.5,0) -- (2.5,1.5);
  \draw [line width=2pt] (1.5,0) -- (2.5,-1.5);
  \draw [line width=2pt] (-1.5,0) -- (-2.5,1.5);
  \draw [line width=2pt] (-1.5,0) -- (-2.5,-1.5);
  \draw [line width=2pt] (-1.5,0) -- (1.5,0);
  \draw [dotted] (0,2.5) -- (2.5,1);
  \draw [dotted] (0,2.5) -- (-2.5,1);
  \draw [dotted] (0,-2.5) -- (2.5,-1);
  \draw [dotted] (0,-2.5) -- (-2.5,-1);
  \draw[dotted] (0,-2.5) -- (0,-2.5);
  \draw[dotted] (-1.5,0) -- (0,-2.5);
  \draw[dotted] (1.5,0) -- (0,2.5);
  \draw[dotted] (1.5,0) -- (0,-2.5);
  \draw[dotted] (0,-2.5) -- (0,0);
  \draw (-3,-0.3) node[circle,fill,inner sep=1pt]{};
  \draw (-3.1,0) node[circle,fill,inner sep=1pt]{};
  \draw (-3,0.3) node[circle,fill,inner sep=1pt]{};
  \draw (3,-0.3) node[circle,fill,inner sep=1pt]{};
  \draw (3.1,0) node[circle,fill,inner sep=1pt]{};
  \draw (3,0.3) node[circle,fill,inner sep=1pt]{};
  \draw [dotted] (0,0) -- (-1.5,0) -- (0,2.5) -- (0,0);
  \draw [<->] (-0.3, 1) -- (0.3, 1) node[above] {X};
  \draw [<->] (-0.75, -0.3) node[right] {Y} -- (-0.75, 0.3);
  \draw [<->] (-0.7, 0.8) -- (-1.2, 1.1) node[above] {Z};
 \end{tikzpicture}
\caption{The action of automorphisms X, Y and Z on the shaded flag}
\label{fig1}
\end{figure}
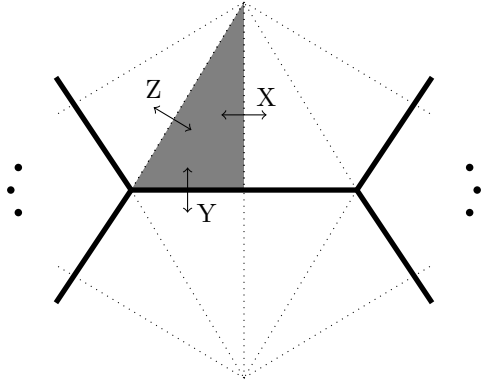

The study of regular maps is equivalent to the study of group presentations of the form $G\cong \langle X,Y,Z | X^2, Y^2, Z^2, (YZ)^k, (ZX)^l, (XY)^2, ... \rangle$, see \cite{Si-surv}. The dots indicate the potential for further relations not listed, and we assume the orders shown are indeed the true orders of those elements in the group.

The surface on which a regular map is embedded could be orientable or non-orientable. If the regular map is on an orientable surface then $G$ has a subgroup of index two which corresponds to the orientation preserving automorphisms. Instead of the group generated by these three involutions $X, Y$ and $Z$, we can consider the group of orientation-preserving automorphisms which is generated by the two rotations $R=YZ$ and $S=ZX$. On a non-orientable surface these two elements will still generate the full automorphism group and we can say that studying these maps is equivalent to studying groups which have presentaions of the form $\langle R, S| R^k, S^l, (RS)^2,...\rangle$.

We focus on regular maps of type $(k,l)$ where the associated group $G \cong \langle X,Y,Z \rangle$ is isomorphic to $PSL(2,q)$ or $PGL(2,q)$ where $q$ is a power of a given odd prime $p$.
Following the convention and notation established in \cite{CPS2}, and applying it to regular {\em maps}, we let $\xi_\kappa$ and $\xi_\lambda$ be respectively primitive $2k$th and $2l$th roots of unity in the finite field $GF(q)$ or $GF(q^2)$ and we define $\omega_i = \xi_i + \xi_i^{-1}$ for $i \in \{\kappa,\lambda\}$. We too assume that $(k,l)$ is a {\em hyperbolic} pair, that is $1/k+1/l < 1/2$. This implies that $k \ge3$ and $l \ge3$. The conditions in this paragraph are what we refer to as {\em the usual setup}.

We can consider duality and Petrie duality as operators on a map. Since the dual of a map is obtained by swapping the vertices for faces and vice versa, in terms of the involutions $X, Y, Z$ the dual operator would fix $Z$ and interchange $X$ and $Y$. The Petrie dual operator would replace $X$ with $XY$ and fix $Y$ and $Z$.  The automorphism associated with any type of duality is an involution. This is because it acts on our map to produce the dual map, and when this automorphism is repeated we get back to the original map. Self-duality and self-Petrie-duality are therefore equivalent to the existence of precisely such involutory automorphisms of $G$, the group associated with the regular map.

Our paper is devoted in large part to finding conditions for the existence of involutory automorphisms which imply self-duality and/or self-Petrie-duality.
The automorphism group for $G$ is $P\Gamma L(2,q)$, the semidirect product $PGL(2,q) \rtimes C_e$ where $q=p^e$. Elements $(A,j) \in P\Gamma L(2,q)$ act as follows: $(A,j)(T) = A \phi_j (T) A^{-1}$ where $\phi_j$ is the repeated Frobenius field automorphism of the finite field, $\phi_j : x \to x^r$ with $r=p^j$. The function $\phi_j$ acts element-wise on a matrix and we use the general rule for compostion in $P\Gamma L(2,q)$ which is $(B,j)(A,i) = (B \phi_j (A), i+j)$. 

When $(A,j) \in P\Gamma L (2, p^e)$ is an involution, it must be such that $(A,j)(A,j) = (A \phi_j (A), 2j ) $ is the identity, so $2j \equiv 0$ (mod e). One case is when there is no field automorphism involved, that is $j=0$ and $A^2 = I$. Alternatively $e = 2j$ is even, and then we need $\phi_j (A) = A^{-1}$. This is summarised in Lemma \ref{invo}.

\begin{lem}\label{invo}
$(A,j) \in P\Gamma L (2, p^e)$ is an involution if and only if one of the following conditions holds:
\begin{enumerate}
\item $j=0$ and $A^2=I$
\item $2j=e$ and $\phi_j (A) = A^{-1}$.
\end{enumerate}
\end{lem}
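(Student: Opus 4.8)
The plan is to compute the square of $(A,j)$ directly from the given composition rule and match it against the identity element $(I,0)$ of $P\Gamma L(2,p^e)=PGL(2,q)\rtimes C_e$. Applying $(B,j)(A,i)=(B\phi_j(A),i+j)$ to the pair $(A,j)$ with itself gives
\[
(A,j)(A,j)=\bigl(A\,\phi_j(A),\,2j\bigr).
\]
Requiring this to be the identity separates into two independent conditions, one in each factor of the semidirect product: the $C_e$-component forces $2j\equiv 0\pmod e$, and the $PGL(2,q)$-component forces $A\,\phi_j(A)$ to be the identity of $PGL(2,q)$, i.e.\ a scalar matrix.

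First I would analyse the component condition $2j\equiv0\pmod e$. Since $j$ runs over the representatives $0\le j\le e-1$ of $C_e\cong\mathbb{Z}/e\mathbb{Z}$, we have $0\le 2j\le 2e-2$, so the only multiples of $e$ in range are $0$ and $e$. Hence either $j=0$, or $2j=e$ (the latter possible only when $e$ is even). These are precisely the two alternatives in the statement, so the lemma reduces to identifying the matrix condition in each case.

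In the case $j=0$ the Frobenius power $\phi_0$ is the identity map, so $A\,\phi_0(A)=A^2$ and the matrix condition becomes $A^2=I$ in $PGL(2,q)$; this is condition (1). In the case $2j=e$ the matrix condition $A\,\phi_j(A)=I$ rearranges to $\phi_j(A)=A^{-1}$ in $PGL(2,q)$; this is condition (2). Both implications are reversible, since substituting either pair of conditions back into the displayed square returns $(I,0)$, which settles the converse direction simultaneously.

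Since the argument is a direct verification, there is no deep obstacle; the care needed is purely in the bookkeeping. The main point to keep straight is that every matrix identity is read in $PGL(2,q)$, so ``$A^2=I$'' and ``$\phi_j(A)=A^{-1}$'' mean equality up to a scalar factor rather than literal equality in $GL$ — this is what permits $A^2$ to be an arbitrary scalar in condition (1) while still representing the projective identity. The only other step requiring attention is the elementary range analysis that pins $2j\equiv0\pmod e$ down to exactly the two stated cases. I would also note in passing that one excludes the scalar (respectively trivially-paired) possibility so that the element has order exactly two rather than being the identity.
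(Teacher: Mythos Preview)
Your proposal is correct and matches the paper's own argument essentially line for line: the paper computes $(A,j)(A,j)=(A\phi_j(A),2j)$, sets this equal to the identity to obtain $2j\equiv 0\pmod e$, and then reads off the two cases $j=0$ (giving $A^2=I$) and $2j=e$ (giving $\phi_j(A)=A^{-1}$). Your version is slightly more explicit about the range analysis for $j$ and about equalities being projective, but the approach is identical.
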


Explicit generating sets are known for regular maps with automorphism group $G$ isomorphic to $PSL(2,q)$ or $PGL(2,q)$, and for details we refer the interested reader to \cite{CPS2}. We present the results for maps of each type as required.

We will need to consider performing operations on the elements $X$, $Y$ and $Z$ of $G$. As such we denote elements of the group $PSL(2,q)$ or $PGL(2,q)$, by a representative matrix with square brackets. This allows us to perform the necessary calculations. We can then determine whether or not two resulting matrices are equivalent within $G$, that is whether or not they correspond to the same element of the group $G$. A pair of matrices are in the same equivalence class, that is they represent the same {\em element} of $G$, if one is a scalar multiple of the other. We use curved brackets for matrix representatives for $X$, $Y$ and $Z$.

Lemma \ref{invo} can then be used to find conditions for the elements of the matrix part $A$ of an involutory automorphism $(A,j)$ as follows.

\begin{lem}\label{invoabcd}
Let $A = \begin{bmatrix}a & b \\ c & d\end{bmatrix}$. The automorphism denoted $(A,j) \in P\Gamma L(2,p^e)$ is an involution, if and only if $a, b, c, d$ satisfy the following equations, with $r=p^j$ for $j=0$ or $2j=e$.
\begin{enumerate}
\item $a^{r+1} = d^{r+1}$, 
\item $bc^r = c b^r$, 
\item $ab^r + bd^r = 0$, 
\item $ca^r + dc^r = 0$. 
\end{enumerate}
\end{lem}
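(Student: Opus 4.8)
The plan is to invoke Lemma \ref{invo} and translate each of its two alternatives into conditions on the entries $a,b,c,d$, bearing in mind throughout that equalities of matrices are read projectively (up to a nonzero scalar), since $A$ represents an element of $PGL(2,p^e)$. I would dispose of the case $j=0$ (so $r=1$) first and directly: here Lemma \ref{invo} asks that $A^2$ be scalar, and since
\[
A^2=\begin{bmatrix} a^2+bc & b(a+d) \\ c(a+d) & d^2+bc \end{bmatrix},
\]
this holds if and only if $b(a+d)=0$, $c(a+d)=0$ and $a^2=d^2$. These are exactly equations (3), (4) and (1) with $r=1$, and (2) reduces to the trivial $bc=cb$; so the case is settled in both directions at once.

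For the case $2j=e$, Lemma \ref{invo} requires $\phi_j(A)=A^{-1}$ projectively. As $A$ is invertible, $A^{-1}$ is represented by the adjugate, so this says $\phi_j(A)=\begin{bmatrix} a^r & b^r \\ c^r & d^r \end{bmatrix}$ is a scalar multiple of $\begin{bmatrix} d & -b \\ -c & a \end{bmatrix}$. Equivalently, the vectors $(a^r,b^r,c^r,d^r)$ and $(d,-b,-c,a)$ are proportional, and since both matrices are nonzero this is the vanishing of all six $2\times2$ minors $u_iv_j-u_jv_i$. A short computation identifies four of these minors as precisely equations (1)--(4), namely $a^{r+1}=d^{r+1}$, $bc^r=cb^r$, $ab^r+bd^r=0$ and $ca^r+dc^r=0$. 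The forward direction is then immediate: if $(A,j)$ is an involution the six minors all vanish, so in particular (1)--(4) hold.

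The converse in this case is where the real work lies, and I expect it to be the main obstacle: the four equations supply only four of the six minors, so I must still recover the two remaining ones, $a^rb+b^rd=0$ and $ac^r+cd^r=0$, before proportionality can be claimed. To obtain the first, multiply it by $a$; using (1) to rewrite $a^{r+1}$ as $d^{r+1}$ and then (3) in the form $ab^r=-bd^r$ collapses the product to $0$, so the minor vanishes whenever $a\neq0$, and if $a=0$ then (1) forces $d=0$ and the minor vanishes trivially. The second remaining minor is handled symmetrically, multiplying by $d$ and using (1) together with (4). (Alternatively, since the entries lie in $GF(q)$ and $2j=e$ gives $r^2=q$, one may apply the Frobenius map $x\mapsto x^r$ to equations (3) and (4) and use $x^{r^2}=x^q=x$ to produce the two missing minors at one stroke.)

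With all six minors vanishing and both matrices nonzero, $\phi_j(A)$ and the adjugate are proportional, hence $\phi_j(A)=A^{-1}$ in $PGL(2,p^e)$, and Lemma \ref{invo} then returns that $(A,j)$ is an involution. Combining the two cases gives the stated equivalence.
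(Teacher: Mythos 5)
Your proof is correct, but it is organised rather differently from the paper's. The paper treats both cases of Lemma \ref{invo} uniformly by writing the involution condition as $A\phi_j(A)=\begin{bmatrix}a^{r+1}+bc^r & ab^r+bd^r\\ ca^r+dc^r & cb^r+d^{r+1}\end{bmatrix}$ being a scalar matrix: the off-diagonal entries give equations (3) and (4) at once, and the single diagonal equality $a^{r+1}+bc^r=cb^r+d^{r+1}$, combined with its image under $\phi_j$ (using $x^{r^2}=x$ and $q$ odd), splits into (1) and (2) by adding and subtracting. You instead separate the case $j=0$ (where your direct computation of $A^2$ is clean and settles both directions immediately) from the case $2j=e$, which you phrase as proportionality of $\phi_j(A)$ with the adjugate of $A$, i.e.\ vanishing of the six $2\times2$ minors. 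This buys you the forward direction for free, but shifts the work to the converse, where you must recover the two minors $a^rb+b^rd$ and $ac^r+cd^r$ not among (1)--(4); your two derivations of these (the multiply-by-$a$ or $d$ argument with its zero subcase, and the slicker Frobenius argument applying $x\mapsto x^r$ to (3) and (4)) are both valid, the latter being essentially the same $x^{r^2}=x^q=x$ trick the paper uses in the other direction. Net effect: the paper's route makes the equivalence visibly reversible in one computation, while yours makes the geometric content ($\phi_j(A)$ proportional to $\mathrm{adj}(A)$) explicit at the cost of an extra implication among the minors. Both are sound.
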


\begin{proof}
By Lemma \ref{invo}, and letting $r=p^j$ we have $A\phi_j(A) = \begin{bmatrix}a & b \\ c & d\end{bmatrix} \begin{bmatrix}a^r & b^r \\ c^r & d^r\end{bmatrix} = \begin{bmatrix}a^{r+1} +b c^r & ab^r +b d^r \\ ca^r +dc^r & cb^r+d^{r+1}\end{bmatrix} = I$. By comparing the leading diagonal entries we see that $a^{r+1} +b c^r = cb^r+d^{r+1}$. Applying the field automorphism $\phi_j$ 
yields $a^{1+r} +b^r c= c^r b+d^{1+r}$. Subtracting these two equations, and remembering that $q$ is odd, we get the first two equations, while looking at the off-diagonal immediately gives rise to the final two equations.
\end{proof}

When we are establishing the conditions under which a regular map is M\"{o}bius regular we will rely on the following group-theoretic result, proved in \cite{LiS} by Li and \v{S}ir\'{a}\v{n}. Note that implicit in this necessary and sufficient condition is that for a map of type $(k,l)$ to be M\"{o}bius regular $k$ must be even.

\begin{lem}\label{MR}
A regular map is M\"{o}bius regular if and only if $XR^{\frac{k}{2}}X=R^{\frac{k}{2}}Y$.
\end{lem}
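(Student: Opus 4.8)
Although this statement is quoted from \cite{LiS}, here is the route I would take to prove it directly from Wilson's definition. I would first fix the correspondence between flags and elements of $G$, with the base flag identified with the identity and $X,Y,Z$ acting as the three local involutions: $X$ swaps the incident vertex, $Y$ the incident face and $Z$ the incident edge, so that $R=YZ$ is the rotation of order $k$ about a vertex. The first observation is that $R^{k/2}$ only makes sense when $k$ is even, and that in the dihedral vertex-stabiliser $\langle Y,Z\rangle\cong D_k$ the element $W:=R^{k/2}$ is the central involution; geometrically it is the half-turn about the vertex, carrying a flag to the flag on the diametrically opposite edge. This already shows that $k$ even is built into the condition, matching the remark preceding the lemma. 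I would also record the equivalent form of the target relation, namely $W^{-1}XW=XY$ (obtained by cancelling one $X$ and using $XY=YX$), since $XY$ is exactly the half-turn about the edge.

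Next I would translate the two halves of Wilson's definition into statements about flags. For the doubled-edge condition I would argue that the $k$ edges incident to $v$ are cyclically permuted by $R$, and that a regular doubled-edge structure forces the neighbour reached across an edge $e_i$ to coincide with the neighbour reached across the antipodal edge $e_{i+k/2}$, so that the two edges to a common neighbour are interchanged by the half-turn $W$. The second, topological half is the heart of the matter: a regular neighbourhood of the pair of edges joining $u$ and $v$ is a band, and it is an annulus or a M\"{o}bius strip according to whether the two edges are glued without or with a half-twist. I would capture this dichotomy at the level of flags by crossing the edge, applying the half-turn, and crossing back: the composite $XR^{k/2}X$ returns a flag incident to the starting vertex, and the half-twist of the M\"{o}bius case is precisely the extra face-reflection $Y$ that distinguishes $R^{k/2}Y$ from the annular alternative $R^{k/2}$.

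Assembling these, I would show that the M\"{o}bius identification of the two antipodal edges is equivalent to the identity $XR^{k/2}X=R^{k/2}Y$ holding on the base flag, and then use regularity (transitivity of $G$ on flags, together with the fact that $W$ is central in the vertex group and so commutes with $Y$, which makes $R^{k/2}Y$ an involution like the left-hand side) to promote this from one flag to a genuine relation in $G$; the converse direction reads the same computation backwards, the relation forcing both the antipodal doubling and the half-twist. The main obstacle I anticipate is making the annulus-versus-M\"{o}bius dichotomy precise as the presence or absence of the factor $Y$, and doing so while keeping careful track of the distinction between the monodromy (left) action and the automorphism (right) action, so that $XR^{k/2}X$ and $R^{k/2}Y$ are genuinely being compared on the same flag; a secondary point needing care is verifying that the antipodal configuration yields \emph{exactly} two edges between adjacent vertices rather than some larger even number.
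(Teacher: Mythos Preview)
The paper does not actually prove this lemma: it is stated as a quotation from Li and \v{S}ir\'{a}\v{n} \cite{LiS} and used as a black box, with no argument given. So there is nothing in the paper to compare your proposal against; you are supplying strictly more than the authors do.

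As to the proposal itself, the skeleton is sound and matches the way such results are typically argued. Your reformulation $W^{-1}XW=XY$ with $W=R^{k/2}$ is correct (using $X^2=1$ and $XY=YX$), and your identification of $W$ as the central involution of the dihedral vertex-stabiliser, together with the reading of $XY$ as the edge half-turn, is the right geometric picture. The one place that is still only heuristic is exactly the point you flag: the assertion that the annulus/M\"{o}bius dichotomy on the two-edge band is detected precisely by whether $XWX$ equals $R^{k/2}$ or $R^{k/2}Y$. To make this rigorous you would need to set up the band explicitly as the union of the flags incident with the two edges, track how the local side-labelling propagates around it, and show that the holonomy of one circuit is trivial in the orientable case and equals the face-swap $Y$ in the non-orientable case. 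You also need, as you note, to pin down that the two edges to a given neighbour are \emph{antipodal} (paired by $W$) and that there are exactly two; this follows from regularity once one double edge exists, but it does require an argument using the vertex-stabiliser action on incident edges. None of this is a gap in the sense of a wrong idea; it is just the expected work that your outline correctly anticipates.
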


\bigskip 

\section{Regular maps on linear fractional groups of type $(p,p)$, $(k,p)$ and $(p,l)$ where $p$ is an odd prime}

For odd prime $p$, by Proposition 3.1 in \cite{CPS2}, maps of the type $(p,p)$ have the following representatives for $X$, $Y$ and $Z$, where $\alpha^2=-1$:
$$
X_1=
- \alpha \left( \begin{array}{cc}
1 & 0\\
2 & {-1}
\end{array} \right)
\text{ , }
Y_1=
- \alpha
\left( \begin{array}{cc}
1 & -1\\
0 & -1
\end{array} \right)
\text{ and }
Z_1=
\alpha
\left( \begin{array}{cc}
1 & 0\\
0 & -1
\end{array} \right)
$$

\begin{prop}
With the usual setup, a map of type $(p,p)$ is self-dual.
\end{prop}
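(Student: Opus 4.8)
The plan is to realise the dual operator explicitly as conjugation by a single matrix. Recall from the discussion preceding Lemma \ref{invo} that the dual operator must fix $Z$ and interchange $X$ and $Y$, and that self-duality of the map is equivalent to the existence of an involutory automorphism $(A,j) \in P\Gamma L(2,q)$ with $(A,j)(Z_1) = Z_1$, $(A,j)(X_1) = Y_1$ and $(A,j)(Y_1) = X_1$, all read as equalities of elements of $G$, that is, up to a scalar multiple of the representing matrices. A crucial simplification is that the entries of $X_1$, $Y_1$, $Z_1$ lie in the prime field $GF(p)$ (the scalars $\alpha$ being irrelevant projectively), so the Frobenius automorphism $\phi_j$ fixes each generator. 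Hence I would search for the duality automorphism in the form $(A,0)$, reducing the problem to finding a matrix $A$ and scalars $\mu_1,\mu_2,\mu_3$ with $A Z_1 A^{-1} = \mu_3 Z_1$, $A X_1 A^{-1} = \mu_1 Y_1$ and $A Y_1 A^{-1} = \mu_2 X_1$.

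First I would pin down the shape of $A$ from the requirement that it fix $Z_1 = \alpha\,\mathrm{diag}(1,-1)$ projectively. A diagonal $A$ preserves the lower-triangular form of $X_1$ and so cannot send it to the upper-triangular $Y_1$; an anti-diagonal $A = \begin{pmatrix} 0 & b \\ c & 0 \end{pmatrix}$ conjugates $Z_1$ to $-Z_1 \sim Z_1$ while swapping upper- and lower-triangular shapes, so this is the natural ansatz. Imposing $A X_1 A^{-1} \sim Y_1$ then forces a single ratio between $b$ and $c$; a short computation gives $c = 2b$, and taking $b=1,\ c=2$ yields the candidate $A = \begin{pmatrix} 0 & 1 \\ 2 & 0 \end{pmatrix}$.

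The remaining work is verification, which I would carry out by direct matrix multiplication, obtaining $A Z_1 A^{-1} = -Z_1$, $A X_1 A^{-1} = -Y_1$ and $A Y_1 A^{-1} = -X_1$: each equals the required generator up to the scalar $-1$ and hence equals it as an element of $G$. Finally I would confirm that $(A,0)$ is genuinely an involution. Since $A^2 = 2I$ is a scalar matrix, $A$ has order two in $PGL(2,q)$, so condition (1) of Lemma \ref{invo} holds; equivalently one checks that $a,b,c,d = 0,1,2,0$ satisfy the four identities of Lemma \ref{invoabcd} with $r = 1$. Because $A$ has entries in $GF(p) \subseteq GF(q)$, conjugation by $A$ is a well-defined automorphism of $G$, so $(A,0)$ is the sought involutory duality automorphism and the map is self-dual.

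I expect the only genuinely delicate point to be the bookkeeping of scalar factors: because $X_1,Y_1,Z_1$ carry the factors $\pm\alpha$ and $A$ is not determinant-normalised, one must work consistently modulo scalars throughout and verify that the argument is uniform in the odd prime $p$. The only place the characteristic enters is the relation $c = 2b$, which is harmless since $2 \neq 0$ in $GF(p)$ for odd $p$.
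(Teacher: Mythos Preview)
Your proof is correct and follows exactly the same approach as the paper: both exhibit the involutory automorphism $(A,0)$ with $A=\begin{pmatrix}0&1\\2&0\end{pmatrix}$ and verify that it fixes $Z_1$ and swaps $X_1$ with $Y_1$ projectively. Your write-up is simply more explicit about how $A$ is found and about the scalar bookkeeping, whereas the paper merely states that the verification is easy.
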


\begin{proof}
For self duality we need $G\cong \langle X,Y,Z \rangle$ to admit an automorphism such that $X$ and $Y$ are interchanged, and $Z$ is fixed. So the question is: can we find an automorphism $(A,j) \in P\Gamma L(2,q)$ such that $A\phi_j(X)A^{-1} = Y$, $A\phi_j(Y)A^{-1} = X$, and  $A\phi_j(Z)A^{-1} = Z$. It is easy to verify that $(A, 0)$, where $A$ has the form $A= \begin{bmatrix} 0&1 \\ 2&0 \end{bmatrix}$ satisfies these conditions, so this type of map is self-dual.
\end{proof}

\begin{prop}
With the usual setup, a map of type $(p,p)$ is not self-Petrie-dual.
\end{prop}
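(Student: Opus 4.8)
The plan is to exploit the fact that any automorphism realizing self-Petrie-duality must preserve element orders, and in particular must preserve the type of the map. Self-Petrie-duality requires an involution $(A,j)\in P\Gamma L(2,q)$ inducing the Petrie operator, that is $A\phi_j(X)A^{-1}=XY$, $A\phi_j(Y)A^{-1}=Y$ and $A\phi_j(Z)A^{-1}=Z$. Such an automorphism sends the face-rotation $ZX$ to $Z(XY)=ZXY$, so it forces $\mathrm{ord}(ZX)=\mathrm{ord}(ZXY)$. Equivalently, the Petrie dual of a map of type $(p,p)$ has vertex-valency $p$ (since $Y$ and $Z$, hence $YZ$, are unchanged) and face length equal to the Petrie length $r=\mathrm{ord}(XYZ)$, because $ZXY=Z(XYZ)Z^{-1}$ is conjugate to $XYZ$. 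Since the type is an isomorphism invariant, it suffices to show $r\neq p$.

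First I would compute the product $X_1Y_1Z_1$ from the given representatives. A direct multiplication gives $X_1Y_1Z_1=-\alpha\begin{bmatrix}1&1\\2&1\end{bmatrix}$, so as an element of $G$ it is represented (up to scalar) by $\begin{bmatrix}1&1\\2&1\end{bmatrix}$, which has trace $2$ and determinant $-1$.

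Next I would invoke the standard description of element orders in $PSL(2,q)$ and $PGL(2,q)$: a non-central element has order equal to the characteristic $p$ precisely when it is unipotent, unipotency being detected by the scale-invariant quantity $\mathrm{tr}^2/\det$ taking the value $4$, while every non-unipotent element has order coprime to $p$. For our matrix $\mathrm{tr}^2/\det=4/(-1)=-4$, and since the hyperbolicity assumption forces $p\ge 5$ we have $-4\neq 4$ in $GF(q)$. Hence $XYZ$ is not unipotent, its order is coprime to $p$, and in particular $r=\mathrm{ord}(XYZ)\neq p$. Therefore the Petrie dual has type $(p,r)$ with $r\neq p$, so it cannot be isomorphic to the original map, and the map is not self-Petrie-dual.

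The main obstacle is the clean justification of the order computation rather than any hard structural fact: one must apply the criterion ``order $p\iff$ unipotent $\iff \mathrm{tr}^2/\det=4$'' correctly in both $PSL(2,q)$ and $PGL(2,q)$, and confirm that the excluded coincidence $-4=4$ genuinely cannot occur, which is exactly where the oddness of $p$ (indeed $p\ge 5$) enters. An alternative route would mimic the direct approach used for self-duality, showing that no involution $(A,j)$ can simultaneously satisfy the three conjugation equations above via Lemma \ref{invoabcd}; this is feasible but more laborious, so the order/type-invariant argument is the more economical plan.
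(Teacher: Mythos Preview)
Your argument is correct and takes a genuinely different route from the paper's. The paper proceeds by brute force: since $\phi_j$ fixes $Y_1$ and $Z_1$ projectively, any involutory $(B,j)$ realizing the Petrie operator would require $B$ to commute (projectively) with both $Z_1$ and $Y_1$; commuting with $Z_1$ forces $B$ to be diagonal or anti-diagonal, and a short computation then shows that neither form can commute with $Y_1$ unless $B$ is scalar. Your approach instead uses the isomorphism invariance of the type: you observe that self-Petrie-duality would force $\mathrm{ord}(ZX)=\mathrm{ord}(ZXY)=\mathrm{ord}(XYZ)$, compute $X_1Y_1Z_1$ explicitly, and rule out order $p$ via the unipotency test $\mathrm{tr}^2/\det=4$, which fails because $\mathrm{tr}^2/\det=-4$ and $p$ is odd. (In fact $p$ odd already suffices here; the hyperbolicity bound $p\ge 5$ is not needed for $-4\ne 4$.)

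What each approach buys: the paper's method is uniform with the rest of the article, where the strategy throughout is to search directly for the required $(A,j)\in P\Gamma L(2,q)$, and it extends naturally to the later cases where a suitable automorphism \emph{does} exist and one wants to exhibit it. Your method is shorter and more conceptual for this particular negative result, replacing the case analysis on $B$ by a single order computation; it also makes transparent \emph{why} the map fails to be self-Petrie-dual (the Petrie length is coprime to $p$), rather than merely that no witnessing automorphism exists.
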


\begin{proof}
In order to be self-Petrie-dual, the group $G$ needs to admit an involutory automorphism $(B,j)$ which fixes $Z$ and $Y$, and exchanges $X$ with $XY$.

First notice that $\phi_j (Z) = Z$ and $\phi_j (Y) = Y$ so if $B$ exists, it must be of a form which commutes with both $Z$ and $Y$. To commute with $Z$, the necessarily non-identity element $B$ must be either $B_1= \begin{bmatrix} 0&b \\ c&0 \end{bmatrix}$ or $B_2= \begin{bmatrix} a&0 \\ 0&d \end{bmatrix}$. Note that $0 \notin \{a,b,c,d\}$ and $a\neq d$. As shown below, neither of these commute with $Y$.
$$B_1 Y =
 - \alpha \begin{bmatrix} 0 & -b \\ c & -c \end{bmatrix} \neq
Y B_1 =
-\alpha \begin{bmatrix} -c & b \\ -c & 0 \end{bmatrix}
$$

$$B_2 Y =
 - \alpha \begin{bmatrix} a & -a \\ 0 & -d \end{bmatrix} \neq
Y B_2 = 
-\alpha \begin{bmatrix} a & -d \\ 0 & -d \end{bmatrix}
$$

 Hence this type of map is not self-Petrie-dual.
\end{proof}

\begin{rem}
Maps of type $(k,p)$ and $(p,l)$ where $k$ and $l$ are coprime to $p$ clearly cannot be self-dual since the vertex degree and face lengths differ.
\end{rem}

\begin{prop}
With the usual setup, and for $k$ coprime to $p$, a map of type $(k,p)$ is self-Petrie-dual if and only if $k|2(r \pm 1)$ and $\pm \omega_\kappa^{(r + 1)} = 4 \xi_\kappa^{(r\pm 1)}$ when the corresponding signs in each $(r\pm 1)$ are read simultaneously, and where $r=p^j$ and $j=0$ or $2j=e$.
\end{prop}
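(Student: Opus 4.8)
The plan is to use the explicit representatives $X$, $Y$, $Z$ for a map of type $(k,p)$ supplied by Conder, Poto\v{c}nik and \v{S}ir\'{a}\v{n} in \cite{CPS2}, and to convert self-Petrie-duality into a concrete matrix problem. By the description of the Petrie operator, the map is self-Petrie-dual precisely when $G$ admits an element $(B,j)\in P\Gamma L(2,q)$ satisfying
\[
B\phi_j(Y)B^{-1}=Y,\qquad B\phi_j(Z)B^{-1}=Z,\qquad B\phi_j(X)B^{-1}=XY,
\]
all as elements of $G$, and which is an involution in the sense of Lemma \ref{invo}. So the first move is to substitute the known matrices and read off constraints case by case for $j=0$ and $2j=e$.

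The second step is to exploit the two \emph{fixing} equations to pin down the shape of $B$. Fixing both $Y$ and $Z$ is equivalent to fixing the dihedral subgroup $\langle Y,Z\rangle$ of order $2k$, and hence to fixing the vertex rotation $R=YZ$. Since this subgroup acts irreducibly for $k\ge 3$, the admissible $(B,j)$ are confined to a very small, explicitly describable (Frobenius-twisted) centraliser, reducing $B$ to one or two candidates depending only on $j$ and a sign. Because $\phi_j$ sends the eigenvalue $\xi_\kappa$ of $R$ to $\xi_\kappa^{\,r}$ with $r=p^j$, requiring $\phi_j(R)$ to be matchable with $R$ inside $G$ forces a power of $\xi_\kappa$ to be a fourth root of unity; tracking this carefully through the $SL$- versus $PGL$-lifts is what I expect to produce the divisibility $k\mid 2(r\pm 1)$, the two signs recording the two ways the eigenvalue pair can be matched.

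The substantive step is then the third equation $B\phi_j(X)B^{-1}=XY$. With $B$ already determined up to the finite ambiguity above, I would compute both sides as explicit matrices and compare entries. After discarding the contribution that is automatically handled by the parabolic element $ZX$ of order $p$ (and hence already accounted for by fixing $Y$ and $Z$), the remaining comparison should collapse to a single relation among $\xi_\kappa$, $\omega_\kappa$ and their Frobenius images, namely $\pm\,\omega_\kappa^{\,r+1}=4\,\xi_\kappa^{\,r\pm 1}$. The key point to verify is that the sign occurring here is forced to coincide with the sign chosen in the eigenvalue matching that gave the divisibility, which is exactly why the $\pm$'s must be read simultaneously rather than independently. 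Conversely, I would check that whenever the divisibility and this field equation hold for an admissible $r$, the matrix $B$ reconstructed from them does satisfy all three conjugation equations and is an involution, thereby realising the self-Petrie-dual structure.

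I expect the main obstacle to be the bookkeeping in this last step: cleanly isolating the genuinely new relation from the identity $B\phi_j(X)B^{-1}=XY$ while correctly separating off the part already guaranteed by the fixing of $Y$ and $Z$, and in particular establishing that the two sign choices are coupled. A secondary difficulty is keeping the scalar ambiguity of working in $PSL(2,q)$ versus $PGL(2,q)$ under control throughout, and treating the $j=0$ and $2j=e$ cases uniformly so that both contribute to the stated equivalence.
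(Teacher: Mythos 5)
Your proposal follows essentially the same route as the paper: substitute the explicit generating triple for type $(k,p)$, use the conditions $B\phi_j(Z)B^{-1}=Z$ and $B\phi_j(Y)B^{-1}=Y$ to force $B$ into one of two shapes (diagonal or anti-diagonal, i.e.\ $c=0$ or $a=0$), read off the divisibility $k\mid 2(r\pm1)$ from the resulting root-of-unity matching, and then extract the field relation $\pm\omega_\kappa^{r+1}=4\xi_\kappa^{r\pm1}$ from $B\phi_j(X)B^{-1}=XY$, with the sign coupling arising exactly as you predict from which of the two shapes of $B$ occurs. Your centraliser/irreducibility framing is a slightly more conceptual packaging of the paper's direct entrywise computation, but the argument is the same; the only trivial discrepancy is that the paper takes this particular generating triple from Hri\v{n}\'akov\'a's thesis rather than from Conder--Poto\v{c}nik--\v{S}ir\'a\v{n}.
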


\begin{proof}
When $k$ is coprime to $p$, \cite{KH} tells us that a map of type $(k,p)$ has the following triple of generating matrices corresponding to $X$, $Y$, and $Z$:
$$
X_2= \eta\alpha\left(
\begin{array}{cc} -\omega_{\kappa} &  -2\xi_{\kappa}\\ 2\xi_{\kappa}^{-1} &
\omega_{\kappa}\end{array}\right),
Y_2 =-\alpha\left(\begin{array}{cc} 0 &
\xi_{\kappa}
\\ \xi_{\kappa}^{-1} & 0\end{array}\right),
Z_2 = \alpha\left(\begin{array}{cc} 0 & 1 \\ 1 & 0
\end{array}\right),
$$

\noindent where $\alpha^2=-1$ and $\eta=(\xi_{\kappa}-\xi_{\kappa}^{-1})^{-1}$.

Suppose the map is self-Petrie-dual and $(B,j) = (\begin{bmatrix} a & b\\ c&d\end{bmatrix},j)$ is the associated involutory automorphism. In order to fix $Z$ we must have $a=d$ and $b=c$ or $a=-d$ and $b=-c$.
In order to fix $Y$ we find that either $a=0$ or $c=0$ in which case we need $k| 2(p^j + 1)$ or $k| 2(p^j -1)$ respectively. When a=0, the involution then interchanges $X$ with $XY$ if and only if $\pm \omega_\kappa^{(r+1)} = 4 \xi_\kappa^{(r+1)}$. When c=0, the involution then interchanges $X$ with $XY$ if and only if $\pm \omega_\kappa^{(r+1)} = 4 \xi_\kappa^{(r-1)}$. 
\end{proof}

\begin{prop}
Under the usual setup, and with $l$ coprime to $p$, a regular map of type $(p,l)$ is self-Petrie-dual if and only if $\omega_\lambda^2 = -\omega_\lambda^{2r}$ where $r=p^j$ and $2j= e$.
\end{prop}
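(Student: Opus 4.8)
The plan is to mirror the template of the preceding proposition for type $(k,p)$, now working with a generating triple $X_3,Y_3,Z_3$ for type $(p,l)$. Such a triple can be obtained from the $(l,p)$ generators (i.e.\ the $(k,p)$ formula of \cite{KH} with $\kappa$ replaced by $\lambda$) by applying the duality operation $X\leftrightarrow Y$, $Z$ fixed, so that up to scalars $Z_3=\alpha\begin{bmatrix}0&1\\1&0\end{bmatrix}$ is the standard involution, $X_3=-\alpha\begin{bmatrix}0&\xi_\lambda\\\xi_\lambda^{-1}&0\end{bmatrix}$ carries the $\xi_\lambda$ data, and $Y_3=\eta\alpha\begin{bmatrix}-\omega_\lambda&-2\xi_\lambda\\2\xi_\lambda^{-1}&\omega_\lambda\end{bmatrix}$ with $\eta=(\xi_\lambda-\xi_\lambda^{-1})^{-1}$ carries the $\omega_\lambda$ data; one verifies $Z_3X_3$ has order $l$ while $Y_3Z_3$ is unipotent of order $p$, as required for type $(p,l)$. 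Self-Petrie-duality is then equivalent to the existence of an involutory automorphism $(B,j)\in P\Gamma L(2,q)$, constrained by Lemma \ref{invo}, with $B\phi_j(Y_3)B^{-1}=Y_3$, $B\phi_j(Z_3)B^{-1}=Z_3$ and $B\phi_j(X_3)B^{-1}=X_3Y_3$ projectively.

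First I would impose the two fixing conditions. Since the entries of $Z_3$ lie in the prime field up to the scalar $\alpha$, we have $\phi_j(Z_3)=Z_3$ projectively for every $j$, so fixing $Z_3$ forces $B$ to commute with $\begin{bmatrix}0&1\\1&0\end{bmatrix}$ up to scalar; this pins $B$ down to the two candidate families $\begin{bmatrix}a&b\\b&a\end{bmatrix}$ and $\begin{bmatrix}a&b\\-b&-a\end{bmatrix}$. Next I would impose $B\phi_j(Y_3)B^{-1}=Y_3$, which is where the field automorphism enters essentially, because $Y_3$ carries $\omega_\lambda$ and $\phi_j$ acts on it. I expect that in the inner case $j=0$ the combined constraints of fixing $Y_3$ and $Z_3$ leave no room to also send $X_3$ to $X_3Y_3$, whereas in the case $2j=e$ (so $r=p^j=\sqrt q$) a solution survives; I would make this precise so as to justify restricting to $2j=e$.

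With the form of $B$ fixed, the final step is to enforce the swap $B\phi_j(X_3)B^{-1}=X_3Y_3$. Here $\phi_j$ sends $\xi_\lambda\mapsto\xi_\lambda^{r}$, $\omega_\lambda\mapsto\omega_\lambda^{r}$, and I would compute $X_3Y_3$ explicitly and match it to $B\phi_j(X_3)B^{-1}$ up to a single projective scalar. After substituting $\omega_\lambda=\xi_\lambda+\xi_\lambda^{-1}$ and the definition of $\eta$, I expect the entrywise comparisons to collapse to the single relation $\omega_\lambda^2=-\omega_\lambda^{2r}$, equivalently $\phi_j(\omega_\lambda^2)=-\omega_\lambda^2$. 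Conversely, given this relation one reads off an explicit $B$ and checks via Lemma \ref{invoabcd} that $(B,j)$ is a genuine involution realising the Petrie duality, yielding sufficiency. As a consistency check, setting $r=1$ would force $2\omega_\lambda^2=0$, hence $\omega_\lambda=0$ and $\xi_\lambda^2=-1$, contradicting $l\ge3$; this explains both why the inner case cannot occur and why only $2j=e$ appears in the statement.

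The step I expect to be the main obstacle is the final matching of $B\phi_j(X_3)B^{-1}$ with $X_3Y_3$: one must track the Frobenius twist on the roots of unity simultaneously with the projective scalar ambiguity and the two candidate forms of $B$, and then show that all the entrywise comparisons reduce to exactly $\omega_\lambda^2=-\omega_\lambda^{2r}$ with no spurious extra condition, in both directions of the equivalence. A secondary point requiring care is verifying that the recovered $B$ is consistent with the full involution constraints of Lemma \ref{invoabcd}, rather than merely satisfying the three conjugation equations.
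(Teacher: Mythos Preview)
Your overall strategy is sound and would eventually succeed, but you have made life much harder for yourself than the paper does, and the step you flag as ``the main obstacle'' is precisely where the paper avoids any work at all.

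The paper does not dualize the $(k,p)$ triple. It quotes from \cite{KH} a bespoke triple for type $(p,l)$ in which
\[
X_3=\alpha\begin{pmatrix}0&\omega_\lambda^{-1}\\ \omega_\lambda&0\end{pmatrix},\qquad
Y_3=-\alpha\begin{pmatrix}1&0\\0&-1\end{pmatrix},\qquad
Z_3=\alpha\begin{pmatrix}1&1\\0&-1\end{pmatrix}.
\]
With this choice, fixing $Z_3$ and $Y_3$ simultaneously forces $B$ (projectively) to be the identity: $Y_3$ is diagonal, so $B$ must be diagonal or antidiagonal, while $Z_3$ is upper triangular with distinct diagonal entries, which kills the antidiagonal option and pins the diagonal one to a scalar. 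Consequently the only possible Petrie-duality automorphism is $(I,j)$ with $2j=e$, and the single remaining condition $\phi_j(X_3)\equiv X_3Y_3$ is a one-line comparison of antidiagonal matrices giving $\omega_\lambda^{2}=-\omega_\lambda^{2r}$. The $j=0$ case is excluded instantly because $B=I$ would force $X_3=X_3Y_3$.

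By contrast, your dualized triple places all the $\xi_\lambda$ and $\omega_\lambda$ data inside $Y_3'$, so the ``fix $Y$'' step becomes a genuine computation in which $\phi_j$ acts nontrivially, producing relations in $\xi_\lambda^{r}$ as well as $\omega_\lambda^{r}$. You then have to show that all of this collapses, together with the $X\leftrightarrow XY$ comparison, to the single $\omega_\lambda$-relation with no residual $\xi_\lambda$-divisibility condition of the sort that appears in the $(k,p)$ result. That cancellation must happen, since the two triples define isomorphic maps, but it is real work that you have only asserted rather than carried out. The paper sidesteps it entirely by choosing a triple whose entries involve $\omega_\lambda$ alone. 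Your consistency check that $r=1$ forces $\omega_\lambda=0$ is correct and matches the paper's exclusion of the inner case.
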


\begin{proof}
Using a similar argument to the above applied to the appropriate matrix triple from \cite{KH}, namely
$$
X_3 = \alpha\left(
\begin{array}{cc} 0& \omega_{\lambda}^{-1} \\
\omega_\lambda & 0\end{array}\right), 
Y_3 =-\alpha\left(\begin{array}{cc} 1 & 0 \\
0 & -1 \end{array}\right), 
Z_3 = \alpha\left(\begin{array}{cc} 1 & 1 \\ 0 & -1
\end{array}\right),
$$ we find that the only allowable form for $B$ is the identity. The necessary non-trivial field automorphism applied to the X and XY interchange then yields the stated condition.
\end{proof}

\begin{rem}
For odd $p$, a regular map of type $(p,p)$ or $(p,l)$ is not M\"{o}bius regular.
This is immediate from the fact that each pair of adjacent vertices in a M\"{o}bius regular map is joined by exactly two edges, hence the vertex degree must be even.
\end{rem}

\begin{prop}
Under the usual setup for even $k$, a map of type $(k,p)$ is M\"{o}bius regular if and only if $\omega_\kappa^2 + 4 =0$
\end{prop}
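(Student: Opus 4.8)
The plan is to invoke the Möbius-regularity criterion of Lemma~\ref{MR}, which asserts that the map is Möbius regular precisely when $X R^{\frac{k}{2}} X = R^{\frac{k}{2}} Y$ in $G$, where $R = YZ$ is the rotation about a vertex. Working with the matrix representatives $X_2, Y_2, Z_2$ displayed above, the first step is to compute $R = Y_2 Z_2$. Using $\alpha^2 = -1$, a direct multiplication shows that $R$ is the diagonal matrix with entries $\xi_\kappa$ and $\xi_\kappa^{-1}$, as one expects since its trace is $\omega_\kappa$ and it has order $k$ in the projective group.

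Next I would compute $R^{\frac{k}{2}}$. Because $\xi_\kappa$ is a primitive $2k$-th root of unity we have $\xi_\kappa^{k} = -1$, so $\xi_\kappa^{k/2}$ is a square root of $-1$; writing $\mu = \xi_\kappa^{k/2}$ gives $\xi_\kappa^{-k/2} = -\mu$, whence $R^{\frac{k}{2}}$ is $\mu$ times $D = \left(\begin{array}{cc} 1 & 0 \\ 0 & -1\end{array}\right)$. Since we work in $PSL(2,q)$ or $PGL(2,q)$, where matrices are identified up to a nonzero scalar, $R^{\frac{k}{2}}$ is represented by $D$.

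The core of the argument is then to evaluate both sides of the criterion. Substituting $R^{\frac{k}{2}} = D$ and $X = X_2$, a short computation shows that $X_2 D X_2$ equals, up to a nonzero scalar, the matrix
\[
\left(\begin{array}{cc} \omega_\kappa^2 + 4 & 4\omega_\kappa \xi_\kappa \\ -4\omega_\kappa \xi_\kappa^{-1} & -(\omega_\kappa^2 + 4)\end{array}\right),
\]
whereas $D Y_2$ equals, up to a nonzero scalar, the antidiagonal matrix
\[
\left(\begin{array}{cc} 0 & \xi_\kappa \\ -\xi_\kappa^{-1} & 0\end{array}\right).
\]
The criterion of Lemma~\ref{MR} demands that these two elements coincide in $G$, that is, that the two matrices agree up to a nonzero scalar. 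As the second matrix has zero diagonal, this forces the diagonal entries $\pm(\omega_\kappa^2 + 4)$ of the first to vanish, yielding the necessary condition $\omega_\kappa^2 + 4 = 0$. Conversely, if $\omega_\kappa^2 + 4 = 0$ then the first matrix becomes $4\omega_\kappa$ times the second, and $\omega_\kappa \neq 0$ since $\omega_\kappa^2 = -4$, so the two elements are equal in $G$ and the map is Möbius regular.

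I expect the only delicate point to be the bookkeeping of projective equivalence: the equation in Lemma~\ref{MR} is an identity of group elements, so throughout one must compare matrices only up to a nonzero scalar rather than literally. The scalar $\mu = \xi_\kappa^{k/2}$ and the factors $\eta$ and $\alpha$ carried by $X_2$ appear as overall scalars on each side and hence drop out of the comparison, but I would verify this explicitly to ensure that the reduction to the single equation $\omega_\kappa^2 + 4 = 0$ is fully justified.
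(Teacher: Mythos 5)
Your proposal is correct and follows essentially the same route as the paper: both invoke Lemma~\ref{MR}, compute $R=Y_2Z_2=\mathrm{diag}(\xi_\kappa,\xi_\kappa^{-1})$ so that $R^{k/2}$ is projectively $\mathrm{diag}(1,-1)$, and compare the same two matrices up to a nonzero scalar to extract the single condition $\omega_\kappa^2+4=0$. Your explicit check of the converse direction (that the off-diagonal entries then match up to the nonzero scalar $4\omega_\kappa$) is a slightly more careful write-up of the step the paper states in one line.
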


\begin{proof}

A regular map is M\"{o}bius regular if and only if the equation $XR^{\frac{k}{2}}X=R^{\frac{k}{2}}Y$ is satisfied. We assume $k$ is even, since if $k$ is odd then the map is certainly not M\"{o}bius regular. In this case $R= [Y_2 Z_2] =\begin{bmatrix} \xi_{\kappa}&0\\ 0&\xi_{\kappa}^{-1} \end{bmatrix}$ so we have $R^{\frac{k}{2}}=\begin{bmatrix} \alpha&0\\ 0&\alpha^{-1} \end{bmatrix}$, where $\alpha^2=-1$.
Hence the map is M\"{o}bius regular if and only if these matrices are equivalent:
\begin{eqnarray*}
XR^{\frac{k}{2}}X  =  -\eta^2\alpha\begin{bmatrix} \omega_{\kappa}^2+4 & 4\omega_\kappa \xi_\kappa \\ -4 \omega_\kappa \xi_\kappa^{-1} & -(\omega_{\kappa}^2+4) \end{bmatrix}  \text{  and  }   R^{\frac{k}{2}}Y  =  \begin{bmatrix} 0&\xi_{\kappa}\\ -\xi_{\kappa}^{-1}&0 \end{bmatrix}.\\
\end{eqnarray*}

These matrices are equivalent if and only if $\omega_\kappa^2 + 4 =0$.
\end{proof}

\bigskip

\section{Regular maps on linear fractional groups of type $(k,l)$ where both $k$ and $l$ are coprime to $p$}

In this case we have different generating triples for the group $G$. As per Proposition 3.2 in \cite{CPS2}, the triple $(X, Y, Z)$ has representatives as defined below where $D=\omega_\kappa^2+\omega_\lambda^2-4$, $\beta= -1/\sqrt{-D}$ and $\eta= (\xi_\kappa - \xi_\kappa^{-1})^{-1}$.

$
X_4=
\eta\beta \left( \begin{array}{cc}
D & D\omega_\lambda \xi_\kappa\\
-\omega_\lambda \xi_\kappa^{-1} & -D
\end{array} \right)
$
,
$
Y_4=
\beta
\left( \begin{array}{cc}
0 & \xi_\kappa D\\
{\xi_\kappa}^{-1} & 0
\end{array} \right)
$
, and 
$Z_4=
\beta
\left( \begin{array}{cc}
0 & D\\
1 & 0
\end{array} \right)
$ 

\smallskip
We will also consider the pair of matrices which represent $R$ and $S$, the rotations around a vertex and a face respectively, which by Proposition 2.2 in \cite{CPS2} are: 

$R_4 = \left( \begin{array}{cc} \xi_\kappa & 0 \\ 0 & \xi_\kappa^{-1} \end{array} \right)$
 and 
$ S_4 = \eta \left( \begin{array}{cc} - \omega_\lambda \xi_\kappa^{-1} & -D \\ 1 & \omega_\lambda \xi_\kappa \end{array} \right)$.

At this point we note that there is an exception for maps of type $(5,5)$, which is addressed in Section \ref{A5}.

\smallskip
\begin{thm}\label{klSD}
Under the usual setup, a map of type $(k,k)$ is self-dual if and only if
$\omega_\lambda = \pm \omega_\kappa^r$ where $r=p^j$, and $j=0$ or $2j=e$.
\end{thm}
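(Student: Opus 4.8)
The plan is to reduce self-duality, exactly as in the earlier $(p,p)$ and $(k,p)$ propositions, to the existence of an involutory automorphism $(A,j)\in P\Gamma L(2,q)$ which fixes $Z$ and interchanges $X$ and $Y$, but now using the generating triple $(X_4,Y_4,Z_4)$ for type $(k,k)$. So I would seek $(A,j)$ satisfying $A\phi_j(Z_4)A^{-1}=Z_4$, $A\phi_j(X_4)A^{-1}=Y_4$ and $A\phi_j(Y_4)A^{-1}=X_4$, all read projectively (i.e. up to a scalar), where by Lemma \ref{invo} either $j=0$ and $A^2=I$, or $2j=e$ and $\phi_j(A)=A^{-1}$. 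Writing $r=p^j$, I would keep in mind throughout that $\phi_j$ acts on the field parameters by $\omega_\kappa\mapsto\omega_\kappa^r$, $\omega_\lambda\mapsto\omega_\lambda^r$, $\xi_\kappa\mapsto\xi_\kappa^r$ and $D\mapsto D^r$.

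First I would extract the admissible shapes of $A$ from the requirement that it fix $Z$. Since $Z_4$ is projectively the anti-diagonal matrix $\begin{bmatrix}0&D\\1&0\end{bmatrix}$, imposing $A\phi_j(Z_4)A^{-1}\propto Z_4$ amounts to $A\phi_j(Z_4)=\mu Z_4 A$ for some scalar $\mu$, and comparing entries forces $\mu^2=1$. This splits $A$ into two one-parameter families according to the sign $\mu=\pm1$; in the case $j=0$ these are $A=\begin{bmatrix}a&cD\\c&a\end{bmatrix}$ and $A=\begin{bmatrix}a&-cD\\c&-a\end{bmatrix}$, with the analogous forms, modified by the Frobenius twist on $D$, when $2j=e$. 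I expect this sign $\mu$ to be exactly the source of the $\pm$ in the statement.

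Next I would substitute each admissible $A$ into the two interchange conditions, writing $A\phi_j(X_4)\propto Y_4A$ and $A\phi_j(Y_4)\propto X_4A$ and comparing entries. This yields a small system of scalar equations in $a,c$ together with $\omega_\kappa,\omega_\lambda,\xi_\kappa,D$; eliminating the free parameters $a,c$ and substituting $D=\omega_\kappa^2+\omega_\lambda^2-4$ should collapse the system to a single relation between $\omega_\kappa$ and $\omega_\lambda$, which after carrying the Frobenius exponent through is $\omega_\lambda=\pm\omega_\kappa^{\,r}$, the two signs matching the two families above. For sufficiency I would then produce, for each sign, an explicit choice of $a,c$ satisfying all the equations and verify via Lemma \ref{invoabcd} that $(A,j)$ is genuinely an involution, so that the condition is not merely necessary.

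The main obstacle I anticipate is bookkeeping around the field automorphism rather than any conceptual difficulty: when $2j=e$ the scalars $\beta$ and $\eta$ and the quantity $D$ are all moved by $\phi_j$, so one must repeatedly use $\phi_j(D)=D^r$ and the multiplicativity of $\phi_j$ to keep the projective comparisons honest, and possibly absorb auxiliary constraints such as $D^{r-1}=1$ that surface when solving the fix-$Z$ equations in this case. One must also treat $j=0$ and $2j=e$ in parallel without letting the scalar ambiguities conceal a spurious or missing solution. Conceptually the answer is transparent — duality swaps the vertex- and face-rotation parameters $\omega_\kappa\leftrightarrow\omega_\lambda$, and two such maps are isomorphic precisely when their parameters agree up to the Frobenius action $\omega\mapsto\omega^{r}$ and the inversion/sign ambiguity in the choice of primitive root — but the careful part is confirming that the elimination leaves no surviving constraint beyond $\omega_\lambda=\pm\omega_\kappa^{\,r}$.
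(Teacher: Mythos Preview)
Your plan is sound and would work, and for the sufficiency half it is essentially what the paper does: restrict $A$ to a shape that fixes $Z_4$ (the paper takes your $\mu=-1$ family, normalising $c=-1$ so $A=\begin{bmatrix}a&D\\-1&-a\end{bmatrix}$), then impose $A\phi_j(X)\propto YA$, obtain a quadratic $a^2-a\,\omega_\lambda^{\,r}\omega_\kappa^{\,r}+D=0$, and check that its roots satisfy $a^r=a$ so that Lemma~\ref{invoabcd} holds. The paper also records, as you anticipated, that the hypothesis $\omega_\lambda^2=\omega_\kappa^{2r}$ forces $D^r=D$, which keeps the Frobenius bookkeeping under control.

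Where the paper differs from your proposal is in the necessity direction. Rather than eliminating $a,c$ from the entry-by-entry system $A\phi_j(X_4)\propto Y_4A$, the paper passes to the rotations: fixing $Z$ and swapping $X\leftrightarrow Y$ is the same as swapping $R^{-1}=ZY$ with $S=ZX$, and since conjugation preserves traces one reads off immediately
\[
\pm\,\phi_j\bigl(\mathrm{tr}(R^{-1})\bigr)=\mathrm{tr}(S),\qquad\text{i.e.}\qquad \pm\,\omega_\kappa^{\,r}=\omega_\lambda,
\]
using $\mathrm{tr}(R_4)=\omega_\kappa$ and $\mathrm{tr}(S_4)=\omega_\lambda$. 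This bypasses the elimination you describe and gives the condition in one line. Your route would reach the same place, but the trace argument is the cleaner device here and is worth knowing.
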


\begin{proof}
Suppose the map is self-dual.

There is an involutory automorphism of $G$ which fixes $Z$ and interchanges $X$ and $Y$. This is equivalent to interchanging the rotations $R^{-1}=(YZ)^{-1}=ZY$ and $S=ZX$ around a vertex and a face respectively. That is there is an automorphism $(A,j)$ which interchanges $\pm R^{-1}$ with $S$. Here the $\pm$ takes into account both representative elements for $R$. So $A(\pm \phi_j (R^{-1}))A^{-1} = S$. Remembering that conjugation preserves traces this implies $\pm \phi_j \mathrm{tr}(R^{-1}) = \mathrm{tr}(S)$ which immediately yields the condition  $\pm \omega_\kappa^{r} = \omega_\lambda$.

Conversely suppose $\pm \omega_\kappa^{r} = \omega_\lambda$.

We note that $\omega_\kappa^{2r} = \omega_\lambda^2 \iff \omega_\kappa^2 = \omega_\lambda^{2r}$
and so $D^r = (\omega_\kappa^2 + \omega_\lambda^2 -4 )^r =\omega_\kappa^{2r} + \omega_\lambda^{2r} -4 = \omega_\lambda^2 + \omega_\kappa^2 -4 = D$.

We aim to find an involutory automorphism $(A,j)$ which demonstrates this map is self-dual. Consider $A = \begin{bmatrix}a & D \\ -1 & -a \end{bmatrix}$ which, by Lemma \ref{invoabcd}, so long as $a^r=a$, satisfies all the equations necessary for the element $(A,j)$ to be involutory. Notice that $(A,j)$ also fixes Z. We also need $X$ and $Y$ to be interchanged by the automorphism in which case the following matrices are equivalent.

$
A \phi_j (X) = \eta^r\beta^r\begin{bmatrix} D(a - \omega_\lambda^r\xi_\kappa^{-r}) & D( a \omega_\lambda^r \xi_\kappa^r -D)  \\  \omega_\lambda^r \xi_\kappa^{-r}a - D & D(a- \omega_\lambda^r \xi_\kappa^r) \end{bmatrix}
\text{ and } YA = \beta \begin{bmatrix} -\xi_\kappa D & - a\xi_\kappa D \\ a \xi_\kappa ^{-1} & D \xi_\kappa ^{-1} \end{bmatrix}
$
\smallskip

Ratio of elements in the leading diagonal: $ -\xi_\kappa^2 = { (a - \omega_\lambda^r\xi_\kappa^{-r})}/{(a- \omega_\lambda^r \xi_\kappa^r)} $

Ratio of elements in the left column: $-D \xi_\kappa^2 /a = { D(a - \omega_\lambda^r\xi_\kappa^{-r})}/{(\omega_\lambda^r \xi_\kappa^{-r}a - D)} $

Ratio of elements in the top row: $1/a = { (a - \omega_\lambda^r\xi_\kappa^{-r})}/{ ( a \omega_\lambda^r \xi_\kappa^r -D)} $

\smallskip
The last ratio listed yields a quadratic in a:
$0 = a^2 - a\omega_\lambda^r (\xi_\kappa ^{-r} + \xi_\kappa^r) + D = a^2   - a \omega_\lambda^r \omega_\kappa^r + D$. This is consistent with all the necessary ratios. All that remains is to check that a value of $a$ satisfying this quadratic is invariant under the repeated Frobenius field automorphism. The discriminant $\Delta = \omega_\kappa^2\omega_\kappa^{2r} - 4(\omega_\kappa^2 +(\omega_\kappa^r)^2 -4)=((\omega_\kappa^r)^{2} -4)(\omega_\kappa^2 - 4)$. Furthermore the expression for $a=(\omega_\lambda^r \omega_\kappa^r \pm \sqrt\Delta)/2$ is invariant under the transformation $x \to x^r$ as required. Hence the map is self-dual.
\end{proof}

\begin{thm}\label{klSPD}
With the usual setup, where $k,l$ are coprime to $p$, a map of type $(k,l)$ is self-Petrie-dual if and only if one of the following conditions is fulfilled:
\begin{trivlist}
\item 1. $\omega_\lambda^{2}=-D$
\item 2. $q=r^2=p^{2j}$, $\omega_\lambda^{2r}=-D$ and $k|r\pm1$.
\end{trivlist}
\end{thm}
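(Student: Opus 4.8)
The plan is to mirror the structure of the earlier self-Petrie-duality propositions and of Theorem \ref{klSD}: self-Petrie-duality is equivalent to the existence of an involutory automorphism $(B,j)\in P\Gamma L(2,q)$ which fixes $Y$ and $Z$ and sends $X$ to $XY$, where all the equalities are equalities of elements of $G$, i.e.\ up to a scalar. By Lemma \ref{invo} there are exactly two cases: $j=0$ with $B^2=I$, which I expect to yield condition~1, and $2j=e$ (so automatically $q=p^{2j}=r^2$) with $\phi_j(B)=B^{-1}$, which I expect to yield condition~2.

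First I would pin down the shape of $B$ from the requirement that it fix $Y$ and $Z$. Since $Y_4Z_4=R_4=\mathrm{diag}(\xi_\kappa,\xi_\kappa^{-1})$ has distinct eigenvalues (as $(k,l)$ is hyperbolic, so $k\ge3$), the condition $B\phi_j(R)B^{-1}\sim R$ controls everything. For $j=0$ this forces $B$ to commute with $R$, hence to be diagonal, and then fixing the anti-diagonal $Z_4$ forces $B\sim\mathrm{diag}(1,-1)$. For $2j=e$, comparing the eigenvalue multisets $\{\xi_\kappa^{\pm r}\}$ and $\{\xi_\kappa^{\pm1}\}$ (up to the $\pm1$ scalar allowed by $\det=1$) shows that a suitable $B$ can exist only when $\xi_\kappa^{r}=\pm\xi_\kappa^{\pm1}$, i.e.\ exactly when $k\mid r\pm1$; in the $k\mid r-1$ subcase $B$ is again diagonal, while in the $k\mid r+1$ subcase $\phi_j(R)\sim R^{-1}$ and so $B$ must be anti-diagonal. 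In each subcase the further requirement of fixing $Z_4$, together with $\phi_j(B)=B^{-1}$, pins $B$ down up to a scalar.

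With the form of $B$ determined, I would compute the product $XY=X_4Y_4$ once and for all; a short calculation using $\beta^2D=-1$ gives the clean representative $XY\sim\eta\begin{bmatrix}\omega_\lambda & \xi_\kappa D\\ -\xi_\kappa^{-1} & -\omega_\lambda\end{bmatrix}$. Conjugating $X_4$ (or $\phi_j(X_4)$) by the explicit $B$ and demanding proportionality to this matrix reduces, after comparing the $(1,1)$ and $(1,2)$ entries, to a single scalar identity. In the $j=0$ case this identity is $\omega_\lambda^2=-D$, which is condition~1; in the $2j=e$ case the Frobenius twist turns it into $\omega_\lambda^{2r}=-D$, which together with $k\mid r\pm1$ and $q=r^2$ is condition~2. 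Because the form of $B$ is essentially forced, this one comparison delivers both necessity and sufficiency, so no separate trace argument is needed.

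The main obstacle I anticipate is the bookkeeping in the $2j=e$ case: I must verify that the explicitly constructed $B$ really is an involution, i.e.\ that its entries lie in $GF(q)$ and satisfy $\phi_j(B)=B^{-1}$ (equivalently the four relations of Lemma \ref{invoabcd}), which requires tracking how $D$, $\omega_\kappa$ and $\omega_\lambda$ transform under $x\mapsto x^{r}$. Here the hypotheses do the work: $k\mid r\pm1$ forces $\omega_\kappa^{2r}=\omega_\kappa^{2}$ (since $\xi_\kappa^{r}=\pm\xi_\kappa^{\pm1}$), and $\omega_\lambda^{2r}=-D$ then controls $D^{r}$, so that the proportionality constant and the field-invariance of the entries of $B$ can be checked directly. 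Handling the two sign choices in $k\mid r\pm1$ and the attendant diagonal-versus-anti-diagonal dichotomy for $B$ is the one place where care is needed; everything else is routine matrix multiplication.
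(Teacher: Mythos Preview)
Your proposal is correct and follows essentially the same route as the paper's proof: both split on $j=0$ versus $2j=e$, determine the shape of $B$ from the requirement that it fix $Y$ and $Z$, and then reduce the $X\leftrightarrow XY$ condition to the stated scalar identity. The only cosmetic differences are that the paper opens with a trace comparison of $\phi_j(ZX)$ and $ZXY$ to extract the necessary condition $\omega_\lambda^{2r}=-D$ up front, and in Case~2 it arrives at the diagonal/anti-diagonal dichotomy by solving the fixing equations for $Y_4$ and $Z_4$ directly (obtaining $a=0$ or $c=0$) rather than via your eigenvalue analysis of $R=YZ$; since fixing $R$ and $Z$ is equivalent to fixing $Y$ and $Z$, the two organisations are interchangeable.
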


\begin{proof}
First suppose the map is self-Petrie-dual. So there exists $(B,j) \in P\Gamma L(2,q)$ such that $B \phi_j (X) B^{-1} = XY$, $B \phi_j (Y) B^{-1} = Y$, and $B \phi_j (Z) B^{-1} = Z$. By comparing the traces of $\phi_j(ZX)$ and $ZXY$ we get  the necessary conditon: $\omega_\lambda^{2r} = -D$.

For the rest of the proof we split the situation into two cases: the first when $j=0$ and we do not consider any field automorphism, and the second case where a field automorphism is included.

\smallskip

Case 1: $j=0$.

Suppose $\omega_\lambda^2 = -D$.
Notice that $B = \begin{bmatrix} 1&0 \\ 0 & -1 \end{bmatrix}$ fixes both Y and Z. 
The map is self-Petrie-dual if
$
BX 
= \eta \beta \begin{bmatrix} D &  D\omega_\lambda \xi_\kappa \\  \omega_\lambda \xi_\kappa^{-1} & D \end{bmatrix}
$ and 
$XYB =
\eta \beta^2\begin{bmatrix} D\omega_\lambda & -D^2\xi_\kappa  \\ -D\xi_\kappa^{-1} & D \omega_\lambda \end{bmatrix}
$ are also equivalent. Comparing these and applying our assumption that $\omega_\lambda ^2 = -D$ we conclude this map is self-Petrie-dual.

\smallskip

Case 2: $2j=e$. By Lemma \ref{invo} we include the repeated Frobenius automorphism.

\smallskip

Suppose $\omega_\lambda^{2r}=-D$ and $k|r\pm1$.

The map is self-Petrie-dual if there is an involutory automorphism which not only fixes $Z$ but also fixes $Y$ and interchanges XY with X. We hope to find $(B,j)= (\begin{bmatrix} a & b \\ c & d \end{bmatrix},j)$, the associated element of $P\Gamma L$. In addition to the conditions for $a,b,c,d$ established in Lemma \ref{invoabcd}, we require $B \phi_j(Z) B^{-1} = Z$ and 
 $ B \phi_j(Y) B^{-1} = Y$.  In order to fix $Z$ we must have $bd = acD^r$ and $D(d^2 - c^2D^r)=a^2D^r-b^2$. Fixing $Y$ yields two further equations: $bd=ac\xi_\kappa^{2r}D^r$ and $\xi_\kappa^2D(d^2\xi_\kappa^{-r} - c^2\xi_\kappa^rD^r) = a^2\xi_\kappa^{r}D^{r} - b^2\xi_\kappa^{-r}$. Since $\xi_\kappa^{2r} \neq 1$ and $D \neq 0$, notice that $bd=ac\xi_\kappa^{2r}D^r = acD^r$ tells us that either $a=0$ or $c=0$.

If $a=0$, we immediately see $d=0$ too and so we can assume $b=1$ without loss of generality.
The equations for $a,b,c,d$ tell us that to fix $Z$ we have $c^2 = \frac{1}{D^{r+1}}$ and to fix Y we have $c^2 \xi^{2r+2} = \frac{1}{D^{r+1}}$. So this automorphism exists only if $\xi_\kappa^{2r+2} = 1$. By definition $\xi_\kappa$ is a primitive $2k$th root of unity and $\xi_\kappa^{2r+2} = 1 \iff 2k | 2r+2 \iff k| r+1$, which is the case by our assumption.

$B \phi_j (XY) = \eta^r \beta^{2r} \begin{bmatrix} -D^r \xi^{-r} & \mp D^r \sqrt{-D} \\ \pm c D^r \sqrt{-D} & c D^{2r} \xi_\kappa^r  \end{bmatrix}$ and
$XB = \eta \beta \begin{bmatrix} cD \omega_\lambda \xi_\kappa & D \\ -cD & -\omega_\lambda \xi_\kappa^{-1}  \end{bmatrix}$ are also equivalent if the map is self-Petrie-dual so we compare the ratios of the elements in turn. This yields $\pm c = \frac{1}{\omega_\lambda \xi_\kappa^{r+1} \sqrt{-D}} = \frac{\omega_\lambda\sqrt{-D}}{D^{r+1} \xi_\kappa^{r+1}}$ which is true only if $D^r = -\omega_\lambda^2$, which is again the case by our assumption. These conditions are consistent with our other requirements for the value of $c$, (namely that $c^r=c$) so we have an automorphism demonstrating that this map is self-Petrie-dual.

If on the other hand $c=0$ then we have $b=0$ and we assume $a=1$ without loss of generality. Fixing $Z$ yields $d^2D=D^r$. Fixing $Y$ yields $d^2D=\xi_\kappa^{2r-2}D^{r}$. So the map is self-Petrie-dual only if $\xi_\kappa^{2r-2} = 1$, which is the case by our assumption.

Now $B \phi_j (XY) = \eta^r \beta^{2r}D^r \begin{bmatrix}  \omega_\lambda^r & D^r\xi_\kappa^r \\ -d\xi_\kappa^{-r} & -d\omega_\lambda^r \end{bmatrix}$ and
$XB = \eta \beta \begin{bmatrix} D  & dD\omega_\lambda\xi_\kappa  \\ -\omega_\lambda\xi_\kappa^{-1} & -dD \end{bmatrix}$.

Again, the map is self-Petrie-dual if these two elements are equivalent, that is if $D^r\xi_\kappa^{r-1}= d\omega_\lambda^{r+1}$ and $\omega_\lambda^{r+1}\xi_\kappa^{r-1} = dD$. Applying our assumption $\omega_\kappa^{2r} = -D$, we conclude the map is self-Petrie-dual.
\end{proof}

The preceeding two results, Theorem \ref{klSD} and Theorem \ref{klSPD}, indicate a {\em sufficient} condition for a regular map of type $(k,k)$ to be both self-dual and self-Petrie-dual, namely $\omega_\kappa^2 = \omega_\lambda^2 = -D$. Corollary \ref{3omsq4} shows this becomes a tractable sufficient condition for both self-duality and self-Petrie-duality.

\begin{cor}\label{3omsq4}
If $\omega = \omega_\kappa = \omega_\lambda$ and $3\omega^2 = 4$ then the associated map is both self-dual and self-Petrie-dual.
\end{cor}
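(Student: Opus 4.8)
The plan is to reduce the corollary directly to the two preceding theorems, by showing that the single hypothesis $3\omega^2=4$ supplies exactly the trace conditions each theorem needs. First I would record the value of $D$ under the assumption $\omega_\kappa=\omega_\lambda=\omega$: substituting into $D=\omega_\kappa^2+\omega_\lambda^2-4$ gives $D=2\omega^2-4$. I would then rearrange the hypothesis, noting that $3\omega^2=4 \iff \omega^2 = 4-2\omega^2 = -(2\omega^2-4) = -D$, so that $\omega_\kappa^2=\omega_\lambda^2=\omega^2=-D$. This is precisely the combined condition flagged in the discussion immediately preceding the corollary.

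Next I would handle self-duality via Theorem \ref{klSD}. The first observation is that $\omega_\kappa=\omega_\lambda$ forces the map to be of type $(k,k)$: since a value $\omega$ determines the root $\xi$ of $x^2-\omega x+1=0$ up to inversion, and both $\xi_\kappa$ and $\xi_\lambda$ solve this equation, they share the same multiplicative order, whence $2k=2l$ and $k=l$. With the map established to be of type $(k,k)$, I take $j=0$ (so $r=1$) in Theorem \ref{klSD}; the required condition $\omega_\lambda=\pm\omega_\kappa^{r}$ then reads simply $\omega_\lambda=\omega_\kappa$, which holds by hypothesis. Hence the map is self-dual.

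For self-Petrie-duality I would invoke Theorem \ref{klSPD}, whose first sufficient condition is $\omega_\lambda^2=-D$. This is exactly the identity established in the first step, so condition 1 is met and the map is self-Petrie-dual. Combining the two conclusions yields the corollary.

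There is essentially no computational obstacle here, since the work is already packaged inside Theorems \ref{klSD} and \ref{klSPD}. The only point that demands a moment's care is the observation that $\omega_\kappa=\omega_\lambda$ genuinely places us in type $(k,k)$, rather than in some type $(k,l)$ with $k\neq l$ that merely happens to share a trace value, because Theorem \ref{klSD} is stated only for type $(k,k)$. Once that is noted, the algebraic identity $3\omega^2=4\iff\omega^2=-D$ does all of the remaining work.
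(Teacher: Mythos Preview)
Your proposal is correct and follows essentially the same route the paper indicates: the corollary is presented there without a formal proof, as an immediate consequence of Theorems~\ref{klSD} and~\ref{klSPD} via the observation that $\omega_\kappa^2=\omega_\lambda^2=-D$, which is exactly the identity $3\omega^2=4\iff\omega^2=-D$ you derive. Your additional remark that $\omega_\kappa=\omega_\lambda$ forces $k=l$ (since $\xi_\kappa$ and $\xi_\lambda$ must then be the two roots of the same quadratic $x^2-\omega x+1$, hence of equal order) is a worthwhile clarification that the paper leaves implicit.
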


We now turn our attention to the conditions for M\"{o}bius regularity.

\begin{prop}
With the usual setup, a regular map of type $(k,l)$ is M\"{o}bius regular if and only if $k$ is even and $\omega_\kappa^2+2\omega_\lambda^2=4$.
\end{prop}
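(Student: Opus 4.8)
The plan is to apply the Möbius regularity criterion of Lemma \ref{MR}, namely that the map is Möbius regular precisely when $XR^{\frac{k}{2}}X = R^{\frac{k}{2}}Y$ holds as an identity of elements of $G$. Since a half-rotation $R^{\frac{k}{2}}$ only makes sense when $k$ is even, I first record that $k$ must be even, as already noted after Lemma \ref{MR}; for odd $k$ the map is not Möbius regular. The first concrete step is to identify $R^{\frac{k}{2}}$ explicitly from $R_4 = \mathrm{diag}(\xi_\kappa,\xi_\kappa^{-1})$. Because $\xi_\kappa$ is a primitive $2k$-th root of unity we have $\xi_\kappa^k = -1$, so $\xi_\kappa^{k/2}$ is a square root of $-1$; writing $\alpha = \xi_\kappa^{k/2}$ with $\alpha^2 = -1$ gives $R_4^{k/2} = \alpha\,\mathrm{diag}(1,-1)$, which is the clean form I would carry into the matrix arithmetic.

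Next I would compute the two matrix representatives appearing in the criterion. Multiplying $R_4^{k/2}Y_4$ is immediate and produces an anti-diagonal matrix, namely a scalar multiple of $\begin{pmatrix} 0 & \xi_\kappa D \\ -\xi_\kappa^{-1} & 0 \end{pmatrix}$. The product $X_4 R_4^{k/2} X_4$ requires two matrix multiplications; after factoring out the common scalar $\alpha\eta^2\beta^2 D$ (irrelevant for equivalence in $G$) I expect it to reduce to $\begin{pmatrix} D+\omega_\lambda^2 & 2D\omega_\lambda\xi_\kappa \\ -2\omega_\lambda\xi_\kappa^{-1} & -(D+\omega_\lambda^2) \end{pmatrix}$, using $\xi_\kappa\xi_\kappa^{-1}=1$ repeatedly, the doubling of the off-diagonal cross terms, and the combination $D^2 + D\omega_\lambda^2 = D(D+\omega_\lambda^2)$ on the diagonal.

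The decisive step is then to compare these two matrices for equivalence in $G$, that is up to a scalar. Since $R_4^{k/2}Y_4$ has vanishing diagonal, equivalence forces the diagonal of $X_4 R_4^{k/2} X_4$ to vanish as well, giving the single equation $D + \omega_\lambda^2 = 0$; substituting $D = \omega_\kappa^2 + \omega_\lambda^2 - 4$ rearranges exactly to $\omega_\kappa^2 + 2\omega_\lambda^2 = 4$, which yields the necessity direction. To close the sufficiency direction I would verify that this one equation is also enough: when $D + \omega_\lambda^2 = 0$ the surviving off-diagonal ratios both reduce to the common value $2\omega_\lambda$, so the two matrices are genuine scalar multiples and the criterion of Lemma \ref{MR} is satisfied. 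I expect the only real obstacle to be the bookkeeping in the double product $X_4 R_4^{k/2} X_4$ and the careful tracking of scalar factors, since the conceptual content — that the anti-diagonal shape of $R_4^{k/2}Y_4$ forces the diagonal entries of the other side to vanish — is transparent once the products are written out.
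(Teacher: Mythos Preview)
Your proposal is correct and follows essentially the same approach as the paper: apply Lemma~\ref{MR}, compute $R^{k/2}=\alpha\,\mathrm{diag}(1,-1)$, multiply out $XR^{k/2}X$ and $R^{k/2}Y$, and observe that the anti-diagonal shape of the latter forces $D+\omega_\lambda^2=0$ while the off-diagonal ratios impose no further constraint. Your write-up is in fact slightly more explicit than the paper's in verifying the sufficiency direction via the common ratio $2\omega_\lambda$.
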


\begin{proof}
By Lemma \ref{MR}, a regular map is M\"{o}bius regular if and only if the equation $XR^{\frac{k}{2}}X=R^{\frac{k}{2}}Y$ is satisfied. In this case $R= \begin{bmatrix} \xi_{\kappa}&0\\ 0&\xi_{\kappa}^{-1} \end{bmatrix} $. So $R^{\frac{k}{2}}=\begin{bmatrix} \alpha&0\\ 0&-\alpha \end{bmatrix}$, where $\alpha^2=-1$. Then $XR^{\frac{k}{2}}X=R^{\frac{k}{2}}Y$ is satisfied if and only if
\begin{eqnarray*}
\eta^2 \beta^2 \begin{bmatrix} \alpha D^2 + \alpha D \omega_\lambda^2 & 2 \alpha D^2 \omega_{\lambda}\xi_{\kappa}\\ -2 \alpha D \omega_{\lambda}\xi_{\kappa}^{-1}&-\alpha D \omega_\lambda^2 - \alpha D^2 \end{bmatrix} &=&\beta\begin{bmatrix} 0&\alpha\xi_{\kappa}D\\ -\alpha \xi_{\kappa}^{-1}&0 \end{bmatrix}\ .\\
\end{eqnarray*}

The elements on the leading diagonal must be zero, which yields just one equation:
$\eta^2\beta \alpha D( D+\omega_{\lambda}^2)=0$. The ratio between the non-zero entries is the same for both matrices and so no further conditions arise.

We conclude that for even $k$, the map is M\"{o}bius regular if and only if $D=-\omega_{\lambda}^2$, which is equivalent to
$\omega_{\kappa}^2+2\omega_{\lambda}^2=4$.
\end{proof}

It is not surprising to see some similarity between conditions for self-Petrie-duality and M\"{o}bius regularity since we know all M\"{o}bius regular maps (with any automorphism group) are also self-Petrie-dual \cite{Wilson}. However, since there are alternative conditions which imply self-Petrie-duality, the converse is not true -- not all self-Petrie-dual regular maps are M\"{o}bius regular.

\bigskip

\section{Regular maps of type $(5,5)$ whose orientation-preserving automorphism group $\langle R, S \rangle$ is isomorphic to $A_5$.}\label{A5}

Adrianov's \cite{Ad} enumeration of regular hypermaps on $PSL(2,q)$ includes a constant which deals with the special case which occurs for maps of type $(5,5)$. 

For us to be considering a map of type $(k,l)$ we must have $2k|q\pm1$ and $2l|q\pm1$, and it is known, see \cite{OKing}, that $PSL(2,q)$ has subgroup $A_5$ when $q \equiv \pm1 \mod 10$. The constant in Adrianov's enumeration, which is 2 for maps of type (5,5) and zero otherwise, is subtracted to account for the cases when the group $\langle R,S \rangle$ collapses into the subgroup $A_5 \le PSL(2,q)$.

The following result, with the usual definitions for $\omega_\kappa$ and $\omega_\lambda$, indicates when the orientation-preserving automorphism group of a type $(5,5)$ map is not the linear fractional group that we might expect, and as such addresses an omission in \cite{CPS2}.

\begin{prop}
The group $\langle R, S \rangle$ of a regular map of type (5,5), generated by the representative matrices $R_4$ and $S_4$, is isomorphic to $A_5$ if and only if $\omega_\lambda \neq \omega_\kappa$.
\end{prop}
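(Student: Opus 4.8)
The plan is to reduce everything to the trace triple $(\mathrm{tr}\,R,\mathrm{tr}\,S,\mathrm{tr}(RS))$ of the generating pair and then exploit the dictionary between such triples and two-generator subgroups of $PSL(2,q)$. First I would compute directly from $R_4$ and $S_4$ that $\mathrm{tr}\,R=\omega_\kappa$, $\mathrm{tr}\,S=\omega_\lambda$ and $\mathrm{tr}(RS)=0$, the last reflecting $(RS)^2=1$ in $PSL(2,q)$. Since $\xi_\kappa,\xi_\lambda$ are primitive $10$th roots of unity, both $\omega_\kappa$ and $\omega_\lambda$ are roots of $t^2-t-1$, so each equals $(1\pm\sqrt5)/2$; thus they are either equal or the two distinct roots, and in the latter case $\omega_\kappa^2+\omega_\lambda^2=3$, giving $D=-1$. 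I would also record that the pair is irreducible: by the Fricke identity $\mathrm{tr}[R,S]=\omega_\kappa^2+\omega_\lambda^2-2=D+2\neq2$ (as $D\neq0$), so $\langle R,S\rangle$ fixes no point and is not contained in a Borel subgroup.

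For the ``if'' direction, suppose $\omega_\kappa\neq\omega_\lambda$, so the trace triple is $\big((1+\sqrt5)/2,(1-\sqrt5)/2,0\big)$. I would first produce, inside an abstract $A_5$, a pair of order-$5$ elements whose product is an involution; these exist, and any such pair generates $A_5$, since the only proper subgroups of $A_5$ containing an element of order $5$ are $C_5$ and $D_5$, in neither of which a product of two order-$5$ elements can have order $2$. Embedding $A_5\hookrightarrow PSL(2,q)$ -- which exists because $q\equiv\pm1\pmod{10}$ in the type $(5,5)$ situation -- the two conjugacy classes of order-$5$ elements carry the two traces $(1\pm\sqrt5)/2$, so this pair lifts to an $SL(2,q)$-pair with exactly our trace triple. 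Since an absolutely irreducible pair in $SL(2,q)$ is determined up to conjugacy by its trace triple, $\langle R,S\rangle$ is conjugate to this copy of $A_5$ and hence isomorphic to it.

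For the ``only if'' direction I would prove the combinatorial heart of the argument: in $A_5$, if $R,S$ have order $5$ and $RS$ has order $2$, then $R$ and $S$ lie in different conjugacy classes. Fixing a $5$-cycle $R$, the set $\mathcal S$ of $5$-cycles $S$ with $(RS)^2=1$ has exactly five elements, and $\langle R\rangle$ acts on $\mathcal S$ by conjugation; this action is fixed-point-free (a $5$-cycle commuting with $R$ lies in $\langle R\rangle$, forcing $RS$ to have order dividing $5$), hence transitive, so all members of $\mathcal S$ share one class, and a single explicit example shows that class is opposite to the class of $R$. Because the two order-$5$ classes of $A_5\subset PSL(2,q)$ have distinct traces (they differ by $\sqrt5\neq0$, using $p\neq5$), the hypothesis $\langle R,S\rangle\cong A_5$ forces $\omega_\kappa\neq\omega_\lambda$. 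Taking the contrapositive completes the equivalence; when $\omega_\kappa=\omega_\lambda$ the irreducibility above together with Dickson's subgroup classification places $\langle R,S\rangle$ in a genuine $PSL(2,q_0)$ or $PGL(2,q_0)$, so the map does not collapse.

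The step I expect to be the main obstacle is the bookkeeping that matches the $A_5$-conjugacy classes of order-$5$ elements to their $PSL(2,q)$-traces, together with making ``determined up to conjugacy by the trace triple'' fully precise -- namely invoking absolute irreducibility and checking that the sign ambiguity in the $SL(2,q)$-lift of each generator is harmless for the abstract isomorphism type.
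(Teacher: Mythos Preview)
Your approach is correct and genuinely different from the paper's. The paper works entirely inside matrices: it invokes the presentation $A_5=\langle a,b\mid a^5,b^5,(ab)^2,(a^4b)^3\rangle$, computes $(R^{-1}S)^3$ explicitly from $R_4,S_4$, and reduces the vanishing of its off-diagonal and the equality of its diagonal entries to the single condition $\omega_\kappa\omega_\lambda=\pm1$, which it then identifies with $\omega_\kappa\neq\omega_\lambda$ using that both are roots of $t^2-t-1$. You instead pass through the trace triple $(\omega_\kappa,\omega_\lambda,0)$ and Macbeath-type rigidity of absolutely irreducible pairs, together with a clean combinatorial lemma in $A_5$ (that any $(5,5,2)$ generating pair must use both five-cycle classes) and the observation that those two classes carry the two distinct traces $(1\pm\sqrt5)/2$ in any embedding into $PSL(2,q)$ with $p\neq5$. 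The paper's route is more elementary and fully self-contained---no appeal to Dickson or to conjugacy being determined by traces---at the cost of an explicit $2\times2$ cube computation and some factorisation. Your route avoids that computation and explains conceptually why the dichotomy $\omega_\kappa=\omega_\lambda$ versus $\omega_\kappa\neq\omega_\lambda$ is exactly the split between the two $A_5$-classes; it also generalises more readily to other exceptional types. The caveat you flag is real but manageable: the sign ambiguity in the $SL(2,q)$ lifts is harmless because the two roots of $t^2-t-1$ are not negatives of each other, so matching traces up to sign already forces $\omega_\kappa\neq\omega_\lambda$, and for the converse one can always choose lifts landing on the correct root.
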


\begin{proof}
From \cite{Rose} we know a presentation of the group $A_5$ is: $\langle a, b | a^5, b^5, (ab)^2, (a^4b)^3 \rangle$.

Considering the group $\langle R, S | R^5, S^5, (RS)^2, \dots \rangle$, it is clear that this will be isomorphic to $A_5$ if and only if the condition $(R^4 S)^3= I$ is also satified.
This is the case if and only if $R^{-1}S$ has order 3.

$$
R^{-1}S
=\eta \begin{bmatrix} \xi_\kappa^{-1} & 0 \\ 0 & \xi_\kappa \end{bmatrix} \begin{bmatrix} -\omega_\lambda \xi_\kappa^{-1} & -D \\ 1 & \omega_\lambda \xi_\kappa \end{bmatrix}
=\eta \begin{bmatrix} -\omega_\lambda \xi_\kappa^{-2} & -\xi_\kappa^{-1}D \\ \xi_\kappa & \omega_\lambda \xi_\kappa^2  \end{bmatrix}
$$

$$
(R^{-1}S)^3 = 
\eta^3 \begin{bmatrix} \omega_\lambda(2D\xi_\kappa^{-2}-\omega_\lambda^2\xi_\kappa^{-6} - D\xi_\kappa^2) & D\xi_\kappa^{-2}(D\xi_\kappa + \omega_\lambda^2(\xi_\kappa - \xi_\kappa^5 - \xi_\kappa^{-3})) \\ -(D\xi_\kappa + \omega_\lambda^2(\xi_\kappa - \xi_\kappa^5 - \xi_\kappa^{-3})) & \omega_\lambda(D\xi_\kappa^{-2}+\omega_\lambda^2\xi_\kappa^6 - 2D\xi_\kappa^2) \end{bmatrix}
$$

The off diagonal elements are both zero if and only if $D\xi_\kappa + \omega_\lambda^2(\xi_\kappa - \xi_\kappa^5 - \xi_\kappa^{-3}) = 0$. This condition is equivalent to $(\omega_\kappa+2)(\omega_\kappa-2)(1+\omega_\kappa\omega_\lambda)(1-\omega_\kappa\omega_\lambda)=0$ and we know that $\omega_\kappa \neq \pm2$ so long as $k\neq p$.

The leading diagonal entries are equal if and only if $D(\xi_\kappa^{-2} + \xi_\kappa^2) = \omega_\lambda^2(\xi_\kappa^6 + \xi_\kappa^{-6})$. Applying $\xi_\kappa^{10} = 1$ and eliminating $D$ shows this is equivalent to $(\omega_\kappa^2 -4)(\omega_\kappa^2 - \omega_\kappa^2\omega_\lambda^2 + \omega_\lambda^2 -2)=0$

Assume $\omega_\kappa^2\omega_\lambda^2 =1$.
The off-diagonals are clearly zero, and the leading diagonal entries are equal since $(\omega_\kappa^2 - \omega_\kappa^2\omega_\lambda^2 + \omega_\lambda^2 -2) = \omega_\kappa^{-2}(\omega_\kappa^4 - 3\omega_\kappa^2 +1)=0$ is always the case since the expression inside the bracket is the sum of powers of $\xi_\kappa^2$, a 5th root of unity. Then $R^{-1}S$ has order 3.

Conversely, assume $(R^{-1}S)^3=I$. Then we instantly have $\omega_\kappa^2 \omega_\lambda^2 = 1$ since $p \neq 5$.

We conclude that $(R^{-1}S)^3=I$ if and only if $\omega_\kappa\omega_\lambda = \pm1$. By considering the two possible values for $\omega_\kappa$ and $\omega_\lambda$ we can prove that this will happen if and only if $\omega_\kappa \neq \omega_\lambda$.
\end{proof}

\section{Tables of results and comments}

In the following tables, produced using the computer package GAP \cite{GAP}, we list for given $q \le 49$, all the $PSL(2,q)$ maps which have one or more of the properties we have addressed in the paper, the ticks indicating when the map has each property. The elements $\xi_{\kappa}$, $\xi_\lambda$, $\omega_\kappa$ and $\omega_\lambda$ are expressed as powers of a primitive element $\xi$ in the field $GF(q^2)$.  For a given $k,l$, only one map is shown in each equivalence class under the action of the automorphism group.

For interest we also include an enumeration in Table \ref{enum} which shows how many $PSL(2,q)$ maps there are with each of these combinations of properties for $q\le81$.

\section*{Tables}
\begin{longtable}{|ccc|cccc|ccc|}
	\hline
	$q$ & $k$ & $l$ & $\log_{\xi}\xi_{\kappa}$ & $\log_{\xi}\xi_{\lambda}$ & $\log_{\xi}\omega_{\kappa}$ & $\log_{\xi}\omega_{\lambda}$ & SD & SPD & MR \\
	\hline
	\endhead
	\hline
	\endfoot
	5 & 5 & 5 &  &  &  &  & \cmark & \xmark & \xmark\\

	\hline
	7 & 7 & 7 &  &  &  &  & \cmark & \xmark & \xmark\\

	\hline
	9 & 5 & 5 & 8 & 8 & 30 & 30 & \cmark & \xmark & \xmark\\

	\hline
	11 & 5 & 5 & 12 & 12 & 36 & 36 & \cmark & \xmark & \xmark\\
11 & 5 & 5 & 36 & 36 & 24 & 24 & \cmark & \cmark & \xmark\\
11 & 5 & 6 & 12 & 10 & 36 & 108 & \xmark & \cmark & \xmark\\
11 & 6 & 6 & 10 & 10 & 108 & 108 & \cmark & \xmark & \xmark\\
11 & 11 & 11 &  &  &  &  & \cmark & \xmark & \xmark\\

	\hline
	13 & 6 & 6 & 14 & 14 & 112 & 112 & \cmark & \xmark & \xmark\\
13 & 7 & 7 & 12 & 12 & 126 & 126 & \cmark & \xmark & \xmark\\
13 & 7 & 7 & 12 & 60 & 126 & 56 & \xmark & \cmark & \xmark\\
13 & 7 & 7 & 36 & 36 & 70 & 70 & \cmark & \cmark & \xmark\\
13 & 7 & 7 & 60 & 60 & 56 & 56 & \cmark & \xmark & \xmark\\
13 & 7 & 13 & 60 &  & 56 &  & \xmark & \cmark & \xmark\\
13 & 13 & 13 &  &  &  &  & \cmark & \xmark & \xmark\\

	\hline
	17 & 8 & 8 & 18 & 18 & 36 & 36 & \cmark & \xmark & \xmark\\
17 & 8 & 8 & 54 & 54 & 90 & 90 & \cmark & \xmark & \xmark\\
17 & 8 & 9 & 54 & 80 & 90 & 54 & \xmark & \cmark & \cmark\\
17 & 8 & 17 & 18 &  & 36 &  & \xmark & \cmark & \cmark\\
17 & 9 & 9 & 16 & 16 & 216 & 216 & \cmark & \xmark & \xmark\\
17 & 9 & 9 & 80 & 80 & 54 & 54 & \cmark & \xmark & \xmark\\
17 & 9 & 9 & 112 & 112 & 18 & 18 & \cmark & \xmark & \xmark\\
17 & 17 & 17 &  &  &  &  & \cmark & \xmark & \xmark\\

	\hline
	19 & 3 & 9 & 60 & 20 & 0 & 300 & \xmark & \cmark & \xmark\\
19 & 5 & 5 & 36 & 36 & 320 & 320 & \cmark & \xmark & \xmark\\
19 & 5 & 5 & 108 & 108 & 220 & 220 & \cmark & \xmark & \xmark\\
19 & 9 & 5 & 20 & 36 & 300 & 320 & \xmark & \cmark & \xmark\\
19 & 9 & 9 & 20 & 20 & 300 & 300 & \cmark & \xmark & \xmark\\
19 & 9 & 9 & 100 & 100 & 80 & 80 & \cmark & \xmark & \xmark\\
19 & 9 & 9 & 140 & 100 & 340 & 80 & \xmark & \cmark & \xmark\\
19 & 9 & 9 & 140 & 140 & 340 & 340 & \cmark & \xmark & \xmark\\
19 & 9 & 10 & 100 & 18 & 80 & 60 & \xmark & \cmark & \xmark\\
19 & 10 & 10 & 18 & 18 & 60 & 60 & \cmark & \xmark & \xmark\\
19 & 10 & 10 & 54 & 54 & 100 & 100 & \cmark & \xmark & \xmark\\
19 & 19 & 19 &  &  &  &  & \cmark & \xmark & \xmark\\

	\hline
	23 & 3 & 11 & 88 & 72 & 0 & 168 & \xmark & \cmark & \xmark\\
23 & 6 & 6 & 44 & 44 & 192 & 192 & \cmark & \xmark & \xmark\\
23 & 6 & 11 & 44 & 216 & 192 & 240 & \xmark & \cmark & \cmark\\
23 & 11 & 11 & 24 & 24 & 360 & 360 & \cmark & \xmark & \xmark\\
23 & 11 & 11 & 72 & 72 & 168 & 168 & \cmark & \xmark & \xmark\\
23 & 11 & 11 & 120 & 120 & 480 & 480 & \cmark & \xmark & \xmark\\
23 & 11 & 11 & 168 & 168 & 336 & 336 & \cmark & \xmark & \xmark\\
23 & 11 & 11 & 216 & 216 & 240 & 240 & \cmark & \xmark & \xmark\\
23 & 12 & 11 & 22 & 24 & 144 & 360 & \xmark & \cmark & \cmark\\
23 & 12 & 12 & 22 & 22 & 144 & 144 & \cmark & \xmark & \xmark\\
23 & 12 & 12 & 110 & 110 & 384 & 384 & \cmark & \cmark & \cmark\\
23 & 23 & 23 &  &  &  &  & \cmark & \xmark & \xmark\\

	\hline
	25 & 3 & 13 & 104 & 72 & 0 & 494 & \xmark & \cmark & \xmark\\
25 & 4 & 13 & 78 & 168 & 390 & 26 & \xmark & \cmark & \xmark\\
25 & 6 & 13 & 52 & 24 & 546 & 260 & \xmark & \cmark & \xmark\\
25 & 12 & 12 & 26 & 26 & 416 & 416 & \cmark & \xmark & \xmark\\
25 & 12 & 13 & 26 & 216 & 416 & 130 & \xmark & \cmark & \cmark\\
25 & 13 & 13 & 24 & 24 & 260 & 260 & \cmark & \xmark & \xmark\\
25 & 13 & 13 & 24 & 120 & 260 & 52 & \cmark & \xmark & \xmark\\
25 & 13 & 13 & 72 & 72 & 494 & 494 & \cmark & \xmark & \xmark\\
25 & 13 & 13 & 72 & 264 & 494 & 598 & \cmark & \xmark & \xmark\\
25 & 13 & 13 & 168 & 168 & 26 & 26 & \cmark & \xmark & \xmark\\
25 & 13 & 13 & 168 & 216 & 26 & 130 & \cmark & \xmark & \xmark\\

	\hline
	27 & 7 & 7 & 52 & 52 & 420 & 420 & \cmark & \xmark & \xmark\\
27 & 13 & 7 & 140 & 156 & 28 & 532 & \xmark & \cmark & \xmark\\
27 & 13 & 13 & 28 & 28 & 560 & 560 & \cmark & \xmark & \xmark\\
27 & 13 & 13 & 28 & 252 & 560 & 672 & \xmark & \cmark & \xmark\\
27 & 13 & 13 & 140 & 140 & 28 & 28 & \cmark & \xmark & \xmark\\
27 & 14 & 14 & 26 & 26 & 476 & 476 & \cmark & \xmark & \xmark\\

	\hline
	29 & 3 & 15 & 140 & 28 & 0 & 480 & \xmark & \cmark & \xmark\\
29 & 5 & 5 & 84 & 84 & 180 & 180 & \cmark & \xmark & \xmark\\
29 & 5 & 5 & 252 & 252 & 240 & 240 & \cmark & \xmark & \xmark\\
29 & 5 & 14 & 84 & 90 & 180 & 270 & \xmark & \cmark & \xmark\\
29 & 5 & 29 & 252 &  & 240 &  & \xmark & \cmark & \xmark\\
29 & 7 & 7 & 60 & 60 & 570 & 570 & \cmark & \xmark & \xmark\\
29 & 7 & 7 & 180 & 180 & 750 & 750 & \cmark & \xmark & \xmark\\
29 & 7 & 7 & 300 & 300 & 780 & 780 & \cmark & \xmark & \xmark\\
29 & 14 & 14 & 30 & 30 & 630 & 630 & \cmark & \xmark & \xmark\\
29 & 14 & 14 & 90 & 90 & 270 & 270 & \cmark & \xmark & \xmark\\
29 & 14 & 14 & 150 & 150 & 540 & 540 & \cmark & \xmark & \xmark\\
29 & 15 & 7 & 308 & 180 & 300 & 750 & \xmark & \cmark & \xmark\\
29 & 15 & 7 & 364 & 300 & 810 & 780 & \xmark & \cmark & \xmark\\
29 & 15 & 14 & 196 & 30 & 90 & 630 & \xmark & \cmark & \xmark\\
29 & 15 & 15 & 28 & 28 & 480 & 480 & \cmark & \xmark & \xmark\\
29 & 15 & 15 & 28 & 308 & 480 & 300 & \xmark & \cmark & \xmark\\
29 & 15 & 15 & 196 & 196 & 90 & 90 & \cmark & \xmark & \xmark\\
29 & 15 & 15 & 308 & 308 & 300 & 300 & \cmark & \xmark & \xmark\\
29 & 15 & 15 & 364 & 364 & 810 & 810 & \cmark & \xmark & \xmark\\
29 & 29 & 29 &  &  &  &  & \cmark & \xmark & \xmark\\

	\hline
	31 & 5 & 5 & 96 & 96 & 128 & 128 & \cmark & \xmark & \xmark\\
31 & 5 & 5 & 288 & 288 & 352 & 352 & \cmark & \xmark & \xmark\\
31 & 8 & 8 & 60 & 60 & 160 & 160 & \cmark & \xmark & \xmark\\
31 & 8 & 8 & 180 & 180 & 704 & 704 & \cmark & \xmark & \xmark\\
31 & 8 & 15 & 60 & 352 & 160 & 320 & \xmark & \cmark & \cmark\\
31 & 8 & 16 & 180 & 30 & 704 & 256 & \xmark & \cmark & \cmark\\
31 & 15 & 15 & 32 & 32 & 416 & 416 & \cmark & \xmark & \xmark\\
31 & 15 & 15 & 224 & 224 & 512 & 512 & \cmark & \xmark & \xmark\\
31 & 15 & 15 & 352 & 352 & 320 & 320 & \cmark & \xmark & \xmark\\
31 & 15 & 15 & 416 & 416 & 672 & 672 & \cmark & \xmark & \xmark\\
31 & 16 & 5 & 210 & 288 & 448 & 352 & \xmark & \cmark & \cmark\\
31 & 16 & 8 & 90 & 60 & 576 & 160 & \xmark & \cmark & \cmark\\
31 & 16 & 15 & 150 & 416 & 544 & 672 & \xmark & \cmark & \cmark\\
31 & 16 & 16 & 30 & 30 & 256 & 256 & \cmark & \xmark & \xmark\\
31 & 16 & 16 & 30 & 150 & 256 & 544 & \xmark & \cmark & \cmark\\
31 & 16 & 16 & 90 & 90 & 576 & 576 & \cmark & \xmark & \xmark\\
31 & 16 & 16 & 150 & 150 & 544 & 544 & \cmark & \xmark & \xmark\\
31 & 16 & 16 & 210 & 210 & 448 & 448 & \cmark & \xmark & \xmark\\
31 & 31 & 31 &  &  &  &  & \cmark & \xmark & \xmark\\

	\hline
	37 & 6 & 6 & 114 & 114 & 1178 & 1178 & \cmark & \xmark & \xmark\\
37 & 9 & 9 & 76 & 76 & 190 & 190 & \cmark & \xmark & \xmark\\
37 & 9 & 9 & 380 & 380 & 1292 & 1292 & \cmark & \xmark & \xmark\\
37 & 9 & 9 & 532 & 532 & 1254 & 1254 & \cmark & \xmark & \xmark\\
37 & 18 & 18 & 38 & 38 & 836 & 836 & \cmark & \xmark & \xmark\\
37 & 18 & 18 & 190 & 190 & 266 & 266 & \cmark & \xmark & \xmark\\
37 & 18 & 18 & 266 & 266 & 76 & 76 & \cmark & \xmark & \xmark\\
37 & 19 & 9 & 180 & 380 & 798 & 1292 & \xmark & \cmark & \xmark\\
37 & 19 & 9 & 468 & 76 & 646 & 190 & \xmark & \cmark & \xmark\\
37 & 19 & 18 & 252 & 38 & 1140 & 836 & \xmark & \cmark & \xmark\\
37 & 19 & 18 & 612 & 266 & 988 & 76 & \xmark & \cmark & \xmark\\
37 & 19 & 19 & 36 & 36 & 1026 & 1026 & \cmark & \xmark & \xmark\\
37 & 19 & 19 & 36 & 108 & 1026 & 418 & \xmark & \cmark & \xmark\\
37 & 19 & 19 & 108 & 108 & 418 & 418 & \cmark & \xmark & \xmark\\
37 & 19 & 19 & 108 & 252 & 418 & 1140 & \xmark & \cmark & \xmark\\
37 & 19 & 19 & 180 & 180 & 798 & 798 & \cmark & \xmark & \xmark\\
37 & 19 & 19 & 252 & 252 & 1140 & 1140 & \cmark & \xmark & \xmark\\
37 & 19 & 19 & 324 & 324 & 1064 & 1064 & \cmark & \xmark & \xmark\\
37 & 19 & 19 & 396 & 396 & 228 & 228 & \cmark & \cmark & \xmark\\
37 & 19 & 19 & 468 & 468 & 646 & 646 & \cmark & \xmark & \xmark\\
37 & 19 & 19 & 540 & 324 & 532 & 1064 & \xmark & \cmark & \xmark\\
37 & 19 & 19 & 540 & 540 & 532 & 532 & \cmark & \xmark & \xmark\\
37 & 19 & 19 & 612 & 612 & 988 & 988 & \cmark & \xmark & \xmark\\
37 & 19 & 37 & 324 &  & 1064 &  & \xmark & \cmark & \xmark\\
37 & 37 & 37 &  &  &  &  & \cmark & \xmark & \xmark\\

	\hline
	41 & 5 & 5 & 168 & 168 & 1638 & 1638 & \cmark & \xmark & \xmark\\
41 & 5 & 5 & 504 & 504 & 882 & 882 & \cmark & \xmark & \xmark\\
41 & 5 & 7 & 168 & 600 & 1638 & 126 & \xmark & \cmark & \xmark\\
41 & 5 & 20 & 504 & 294 & 882 & 924 & \xmark & \cmark & \xmark\\
41 & 7 & 7 & 120 & 120 & 210 & 210 & \cmark & \xmark & \xmark\\
41 & 7 & 7 & 360 & 360 & 504 & 504 & \cmark & \xmark & \xmark\\
41 & 7 & 7 & 600 & 600 & 126 & 126 & \cmark & \xmark & \xmark\\
41 & 10 & 10 & 84 & 84 & 630 & 630 & \cmark & \xmark & \xmark\\
41 & 10 & 10 & 252 & 252 & 672 & 672 & \cmark & \xmark & \xmark\\
41 & 10 & 21 & 84 & 520 & 630 & 336 & \xmark & \cmark & \cmark\\
41 & 10 & 41 & 252 &  & 672 &  & \xmark & \cmark & \cmark\\
41 & 20 & 20 & 42 & 42 & 1302 & 1302 & \cmark & \xmark & \xmark\\
41 & 20 & 20 & 42 & 126 & 1302 & 714 & \xmark & \cmark & \cmark\\
41 & 20 & 20 & 126 & 126 & 714 & 714 & \cmark & \xmark & \xmark\\
41 & 20 & 20 & 294 & 294 & 924 & 924 & \cmark & \xmark & \xmark\\
41 & 20 & 20 & 378 & 378 & 420 & 420 & \cmark & \xmark & \xmark\\
41 & 20 & 21 & 126 & 680 & 714 & 1218 & \xmark & \cmark & \cmark\\
41 & 20 & 21 & 294 & 200 & 924 & 168 & \xmark & \cmark & \cmark\\
41 & 20 & 21 & 378 & 440 & 420 & 756 & \xmark & \cmark & \cmark\\
41 & 21 & 21 & 40 & 40 & 1428 & 1428 & \cmark & \xmark & \xmark\\
41 & 21 & 21 & 200 & 200 & 168 & 168 & \cmark & \xmark & \xmark\\
41 & 21 & 21 & 440 & 440 & 756 & 756 & \cmark & \xmark & \xmark\\
41 & 21 & 21 & 520 & 520 & 336 & 336 & \cmark & \xmark & \xmark\\
41 & 21 & 21 & 680 & 680 & 1218 & 1218 & \cmark & \xmark & \xmark\\
41 & 21 & 21 & 760 & 760 & 1134 & 1134 & \cmark & \xmark & \xmark\\
41 & 41 & 41 &  &  &  &  & \cmark & \xmark & \xmark\\

	\hline
	43 & 3 & 22 & 308 & 294 & 0 & 1276 & \xmark & \cmark & \xmark\\
43 & 7 & 7 & 132 & 132 & 792 & 792 & \cmark & \xmark & \xmark\\
43 & 7 & 7 & 396 & 396 & 220 & 220 & \cmark & \xmark & \xmark\\
43 & 7 & 7 & 660 & 660 & 1760 & 1760 & \cmark & \xmark & \xmark\\
43 & 7 & 21 & 132 & 484 & 792 & 1628 & \xmark & \cmark & \xmark\\
43 & 7 & 21 & 396 & 572 & 220 & 484 & \xmark & \cmark & \xmark\\
43 & 7 & 21 & 660 & 748 & 1760 & 1496 & \xmark & \cmark & \xmark\\
43 & 11 & 11 & 84 & 84 & 1452 & 1452 & \cmark & \xmark & \xmark\\
43 & 11 & 11 & 252 & 252 & 1012 & 1012 & \cmark & \xmark & \xmark\\
43 & 11 & 11 & 420 & 420 & 1540 & 1540 & \cmark & \xmark & \xmark\\
43 & 11 & 11 & 588 & 588 & 1584 & 1584 & \cmark & \xmark & \xmark\\
43 & 11 & 11 & 756 & 756 & 1804 & 1804 & \cmark & \xmark & \xmark\\
43 & 21 & 7 & 484 & 660 & 1628 & 1760 & \xmark & \cmark & \xmark\\
43 & 21 & 11 & 44 & 420 & 396 & 1540 & \xmark & \cmark & \xmark\\
43 & 21 & 11 & 220 & 588 & 1100 & 1584 & \xmark & \cmark & \xmark\\
43 & 21 & 11 & 748 & 84 & 1496 & 1452 & \xmark & \cmark & \xmark\\
43 & 21 & 11 & 836 & 252 & 440 & 1012 & \xmark & \cmark & \xmark\\
43 & 21 & 21 & 44 & 44 & 396 & 396 & \cmark & \xmark & \xmark\\
43 & 21 & 21 & 220 & 220 & 1100 & 1100 & \cmark & \xmark & \xmark\\
43 & 21 & 21 & 484 & 484 & 1628 & 1628 & \cmark & \xmark & \xmark\\
43 & 21 & 21 & 572 & 220 & 484 & 1100 & \xmark & \cmark & \xmark\\
43 & 21 & 21 & 572 & 572 & 484 & 484 & \cmark & \xmark & \xmark\\
43 & 21 & 21 & 748 & 748 & 1496 & 1496 & \cmark & \xmark & \xmark\\
43 & 21 & 21 & 836 & 836 & 440 & 440 & \cmark & \xmark & \xmark\\
43 & 22 & 22 & 42 & 42 & 132 & 132 & \cmark & \xmark & \xmark\\
43 & 22 & 22 & 126 & 126 & 308 & 308 & \cmark & \xmark & \xmark\\
43 & 22 & 22 & 210 & 210 & 968 & 968 & \cmark & \xmark & \xmark\\
43 & 22 & 22 & 294 & 294 & 1276 & 1276 & \cmark & \xmark & \xmark\\
43 & 22 & 22 & 378 & 378 & 1672 & 1672 & \cmark & \xmark & \xmark\\
43 & 43 & 43 &  &  &  &  & \cmark & \xmark & \xmark\\

	\hline
	47 & 3 & 23 & 368 & 528 & 0 & 1152 & \xmark & \cmark & \xmark\\
47 & 6 & 6 & 184 & 184 & 480 & 480 & \cmark & \xmark & \xmark\\
47 & 6 & 24 & 184 & 46 & 480 & 672 & \xmark & \cmark & \cmark\\
47 & 8 & 8 & 138 & 138 & 960 & 960 & \cmark & \xmark & \xmark\\
47 & 8 & 8 & 414 & 414 & 576 & 576 & \cmark & \xmark & \xmark\\
47 & 8 & 23 & 138 & 624 & 960 & 144 & \xmark & \cmark & \cmark\\
47 & 8 & 23 & 414 & 432 & 576 & 528 & \xmark & \cmark & \cmark\\
47 & 12 & 8 & 92 & 414 & 1200 & 576 & \xmark & \cmark & \cmark\\
47 & 12 & 12 & 92 & 92 & 1200 & 1200 & \cmark & \xmark & \xmark\\
47 & 12 & 12 & 460 & 460 & 1008 & 1008 & \cmark & \xmark & \xmark\\
47 & 12 & 23 & 460 & 816 & 1008 & 768 & \xmark & \cmark & \cmark\\
47 & 23 & 23 & 48 & 48 & 1344 & 1344 & \cmark & \xmark & \xmark\\
47 & 23 & 23 & 144 & 144 & 1056 & 1056 & \cmark & \xmark & \xmark\\
47 & 23 & 23 & 240 & 240 & 288 & 288 & \cmark & \xmark & \xmark\\
47 & 23 & 23 & 336 & 336 & 720 & 720 & \cmark & \xmark & \xmark\\
47 & 23 & 23 & 432 & 432 & 528 & 528 & \cmark & \xmark & \xmark\\
47 & 23 & 23 & 528 & 528 & 1152 & 1152 & \cmark & \xmark & \xmark\\
47 & 23 & 23 & 624 & 624 & 144 & 144 & \cmark & \xmark & \xmark\\
47 & 23 & 23 & 720 & 720 & 1296 & 1296 & \cmark & \xmark & \xmark\\
47 & 23 & 23 & 816 & 816 & 768 & 768 & \cmark & \xmark & \xmark\\
47 & 23 & 23 & 912 & 912 & 624 & 624 & \cmark & \xmark & \xmark\\
47 & 23 & 23 & 1008 & 1008 & 2016 & 2016 & \cmark & \xmark & \xmark\\
47 & 24 & 12 & 46 & 460 & 672 & 1008 & \xmark & \cmark & \cmark\\
47 & 24 & 23 & 322 & 144 & 1920 & 1056 & \xmark & \cmark & \cmark\\
47 & 24 & 23 & 506 & 48 & 336 & 1344 & \xmark & \cmark & \cmark\\
47 & 24 & 24 & 46 & 46 & 672 & 672 & \cmark & \xmark & \xmark\\
47 & 24 & 24 & 230 & 230 & 1488 & 1488 & \cmark & \cmark & \cmark\\
47 & 24 & 24 & 322 & 322 & 1920 & 1920 & \cmark & \xmark & \xmark\\
47 & 24 & 24 & 506 & 506 & 336 & 336 & \cmark & \xmark & \xmark\\
47 & 47 & 47 &  &  &  &  & \cmark & \xmark & \xmark\\

	\hline
	49 & 4 & 25 & 300 & 144 & 400 & 1250 & \xmark & \cmark & \xmark\\
49 & 5 & 5 & 240 & 240 & 1350 & 1350 & \cmark & \xmark & \xmark\\
49 & 6 & 24 & 200 & 250 & 1400 & 2050 & \xmark & \cmark & \xmark\\
49 & 7 & 24 &  & 50 &  & 500 & \xmark & \cmark & \xmark\\
49 & 7 & 25 &  & 432 &  & 1500 & \xmark & \cmark & \xmark\\
49 & 7 & 25 &  & 816 &  & 100 & \xmark & \cmark & \xmark\\
49 & 8 & 25 & 150 & 48 & 2200 & 1450 & \xmark & \cmark & \xmark\\
49 & 8 & 25 & 450 & 528 & 600 & 950 & \xmark & \cmark & \xmark\\
49 & 12 & 12 & 100 & 100 & 750 & 750 & \cmark & \xmark & \xmark\\
49 & 12 & 25 & 100 & 144 & 750 & 1250 & \xmark & \cmark & \cmark\\
49 & 24 & 12 & 50 & 100 & 500 & 750 & \xmark & \cmark & \cmark\\
49 & 24 & 24 & 50 & 50 & 500 & 500 & \cmark & \xmark & \xmark\\
49 & 24 & 24 & 50 & 350 & 500 & 1100 & \cmark & \xmark & \xmark\\
49 & 24 & 24 & 250 & 250 & 2050 & 2050 & \cmark & \xmark & \xmark\\
49 & 24 & 24 & 250 & 550 & 2050 & 1150 & \cmark & \xmark & \xmark\\
49 & 24 & 25 & 250 & 912 & 2050 & 700 & \xmark & \cmark & \cmark\\
49 & 25 & 25 & 48 & 48 & 1450 & 1450 & \cmark & \xmark & \xmark\\
49 & 25 & 25 & 48 & 336 & 1450 & 550 & \cmark & \xmark & \xmark\\
49 & 25 & 25 & 144 & 144 & 1250 & 1250 & \cmark & \xmark & \xmark\\
49 & 25 & 25 & 144 & 1008 & 1250 & 1550 & \cmark & \xmark & \xmark\\
49 & 25 & 25 & 432 & 432 & 1500 & 1500 & \cmark & \xmark & \xmark\\
49 & 25 & 25 & 432 & 624 & 1500 & 900 & \cmark & \xmark & \xmark\\
49 & 25 & 25 & 528 & 528 & 950 & 950 & \cmark & \xmark & \xmark\\
49 & 25 & 25 & 528 & 1104 & 950 & 1850 & \cmark & \xmark & \xmark\\
49 & 25 & 25 & 816 & 816 & 100 & 100 & \cmark & \xmark & \xmark\\
49 & 25 & 25 & 816 & 912 & 100 & 700 & \cmark & \xmark & \xmark\\

	\hline
\end{longtable}

\smallskip

\begin{table}
\begin{tabular}{|b{0.5cm}b{1cm}|*{5}{>{~~}b{1.3cm}}b{1.9cm}|}
	\hline
	$q$ & Maps & None & SD only & SP only & SD+SP & SP+MR & SD+SP+MR \tabularnewline
	\hline
		5 & 1 & 0 & 1 & 0 & 0 & 0 & 0 \tabularnewline
	7 & 5 & 4 & 1 & 0 & 0 & 0 & 0 \tabularnewline
	9 & 3 & 2 & 1 & 0 & 0 & 0 & 0 \tabularnewline
	11 & 16 & 11 & 3 & 1 & 1 & 0 & 0 \tabularnewline
	13 & 33 & 26 & 4 & 2 & 1 & 0 & 0 \tabularnewline
	17 & 58 & 50 & 6 & 0 & 0 & 2 & 0 \tabularnewline
	19 & 70 & 58 & 8 & 4 & 0 & 0 & 0 \tabularnewline
	23 & 113 & 101 & 8 & 1 & 0 & 2 & 1 \tabularnewline
	25 & 63 & 52 & 7 & 3 & 0 & 1 & 0 \tabularnewline
	27 & 54 & 48 & 4 & 2 & 0 & 0 & 0 \tabularnewline
	29 & 183 & 163 & 13 & 7 & 0 & 0 & 0 \tabularnewline
	31 & 209 & 190 & 13 & 0 & 0 & 6 & 0 \tabularnewline
	37 & 315 & 290 & 16 & 8 & 1 & 0 & 0 \tabularnewline
	41 & 382 & 356 & 18 & 2 & 0 & 6 & 0 \tabularnewline
	43 & 430 & 400 & 20 & 10 & 0 & 0 & 0 \tabularnewline
	47 & 515 & 485 & 20 & 1 & 0 & 8 & 1 \tabularnewline
	49 & 264 & 238 & 16 & 7 & 0 & 3 & 0 \tabularnewline
	53 & 663 & 625 & 25 & 13 & 0 & 0 & 0 \tabularnewline
	59 & 820 & 779 & 27 & 13 & 1 & 0 & 0 \tabularnewline
	61 & 879 & 836 & 28 & 14 & 1 & 0 & 0 \tabularnewline
	67 & 1072 & 1024 & 32 & 16 & 0 & 0 & 0 \tabularnewline
	71 & 1199 & 1151 & 32 & 4 & 0 & 11 & 1 \tabularnewline
	73 & 1276 & 1227 & 33 & 3 & 1 & 12 & 0 \tabularnewline
	79 & 1493 & 1438 & 37 & 2 & 0 & 16 & 0 \tabularnewline
	81 & 381 & 356 & 15 & 7 & 0 & 3 & 0 \tabularnewline

	\hline
\end{tabular}
\caption{External symmetries of regular maps on $PSL(2,q)$}\label{enum}
\end{table}

\newpage

It has been an open problem for some time as to whether there exists a self-dual and self-Petrie-dual regular map for any given vertex degree $k$ on some surface. In \cite{ACS}, Archdeacon, Conder and \v{S}ir\'{a}\v{n} proved the existence of such a map for any even valency.
The work in this paper allows us to prove existence of a self-dual, Self-Petrie-dual regular map for any given odd valency $k\ge5$. This will be proved in an upcoming paper by Fraser, Jeans and \v{S}ir\'{a}\v{n} \cite{FJS}.

\smallskip

\bigskip
{\bf Acknowledgements:} The authors would like to thank Jozef \v{S}ir\'{a}\v{n} for many useful disussions during the preparation of this paper. The second author acknowledges partial support by Slovak research grants VEGA 1/0026/16, VEGA 1/0142/17, APVV-0136-12, APVV-15-0220 and APVV-17-0428.


\begin{thebibliography}{xx}

\bibitem{Ad} N. M. Adrianov, Regular maps with automorphism group PSL2(q), Communications of the Moscow Mathematical Society, Russ. Math. Surv. (1997) 52, 819. 

\bibitem{ACS} D. Archdeacon, M. Conder and J. \v{S}ir\'a\v{n}, Trinity symmetry and kaleidoscopic regular maps,  Trans. Amer. Math. Soc. 366 (2014) 8, 4491--4512.

\bibitem{Brah} H. R. Brahana, Regular maps and their groups, Amer. J. Math. 49 (1927) 268--284.

\bibitem{CPS2} M. Conder, P. Poto\v cnik and J. \v Sir\'a\v n,
Regular hypermaps over projective linear groups, J. Australian Math. Soc. 85 (2008), 155--175.

\bibitem{FJS} J. Fraser, O. Jeans and J. \v{S}ir\'a\v{n},  Regular self-dual and self-Petrie-dual maps of arbitrary valency, in preparation.

\bibitem {GAP} {The GAP {G}roup}, {G}roups, {A}lgorithms, and {P}rogramming, {V}ersion 4.9.1, {May 2018}, {\href {https://www.gap-system.org} {\texttt{https://www.gap-system.org}}}.

\bibitem{KH} K. Hri\v{n}\'{a}kov\'{a}, Regular maps on linear fractional groups, PhD Dissertation, (2016) Slovak University of Technology, Bratislava, Slovakia.

\bibitem{JS} G. A. Jones and D. Singerman, Theory of maps on orientable surfaces. Proc. London Math. Soc. (3) 37 (1978) no. 2, 273–307

\bibitem{OKing} O. H. King, The subgroup structure of {f}inite classical groups in terms of geometric con{f}igurations, In: B.S. Webb ed., Survey in Combinatorics, 2005. Cambridge: CUP, (2005)


\bibitem{LiS} C. H. Li and J. \v{S}ir\'a\v{n}, M\"{o}bius regular maps, Journal of Combinatorial Theory, Series B 97 (2007) 57--73.

\bibitem{Mac} A. M. Macbeath, Generators of the linear fractional groups, In: 1969 Number Theory (Proc. Sympos. Pure Math., Vol. XII, Houston, Tex.), Amer. Math. Soc., Providence, R.I., (1967) 14--32.

\bibitem{Rose} H. E. Rose, A course on finite groups, Springer-Verlag London, (2009)

\bibitem{Sah} C. H. Sah, Groups related to compact Riemann surfaces, Acta Math. 123 (1969), 13--42.

\bibitem{Si-surv} J. \v{S}ir\'a\v{n}, How symmetric can maps on surfaces be? In: Surveys in Combinatorics (LMS Lecture Note Series 409), Cambridge Univ. Press, (2013) 161--238.

\bibitem{Wilson} S. E. Wilson, Cantankerous maps and rotary embeddings of $K_n$, J. Combin. Theory, Series B 47 (3) (1989) 262--279.

\end{thebibliography}

\end{document}